\newtheorem{theorem}{Theorem}[section]
\newtheorem{corollary}[theorem]{Corollary}
\newtheorem{lemma}[theorem]{Lemma}
\newtheorem{proposition}[theorem]{Proposition}
\theoremstyle{definition}
\newtheorem{definition}[theorem]{Definition}
\newtheorem{example}[theorem]{Example}
\newtheorem{remark}[theorem]{Remark}
\newtheorem{question}[theorem]{Question}
\newtheorem*{lemma*}{Lemma}
\newtheorem*{proposition*}{Proposition}
\newtheorem*{theorem*}{Theorem}
\newtheorem*{corollary*}{Corollary}
\newcommand{\Hom}{\operatorname{Hom}}
\newcommand{\Ext}{\operatorname{Ext}}
\newcommand{\id}{\operatorname{id}}
\newcommand{\res}{\operatorname{res}}
\newcommand{\HH}{\operatorname{H}}
\begin{document}

\title{No quasi-isometric rigidity for \\ proper actions on CAT(0) cube complexes}
\author{Francesco Fournier-Facio and Anthony Genevois}
\date{\today}
\maketitle

\begin{abstract}
We exhibit a variety of groups that act properly and even cocompactly on median graphs (a.k.a. one-skeletons of CAT(0) cube complexes), with quasi-isometric groups that do not admit any proper action on a median graph. This answes a question of Niblo, Sageev and Wise.
Our examples are all quasi-isometrically trivial central extensions of certain cubulated groups.
\end{abstract}


\section{Introduction}

The study of group actions on median graphs (a.k.a. one-skeletons of CAT(0) cube complexes) is a major subject in modern geometric group theory. This is mainly due to the numerous groups of interest acting on median graphs (such as Artin groups, Coxeter groups, small cancellation groups, random groups, one-relator groups with torsion, $3$-manifold groups, free-by-cyclic groups, graph braid groups, Thompson-like groups, Cremona groups) and to the valuable information that can be deduced from such actions (including Tits alternative, Hilbertian geometry, bi-automaticity, asymptotic dimension, subgroup separability, subgroup distortion, behaviours of negative curvature). Applications include many branches of mathematics, most famously low-dimensional topology with the proof the virtual Haken conjecture.

As is common in the field, it is natural to wonder whether the existence of such actions is a quasi-isometry invariant. During the 2007 AIM Meeting \emph{Problems in geometric group theory}, Niblo, Sageev and Wise asked the following question:

\begin{question}[{\cite[Question 12]{questions}}]
\label{q}

Is the property of acting (properly, cocompactly) on a median graph a quasi-isometry invariant?
\end{question}

In \cite{questions}, the authors also remark that the property of acting without fixed points on a median graph is not a quasi-isometry invariant, since there are groups with property (T) that are quasi-isometric to groups surjecting onto $\mathbb{Z}$ \cite[Theorem~3.6.5]{MR2415834}.

They suggest that a negative answer could come from understanding actions of certain lattices on CAT(0) cube complexes. In particular, if $\Gamma \leq \mathrm{SL}_2(\mathbb{R}) \times \mathrm{SL}_2(\mathbb{R})$ is an irreducible uniform lattice, then it is quasi-isometric to the product of two surface groups, which is cocompactly cubulated (see Example \ref{ex:surface}), but such lattices $\Gamma$ are conjectured to have Property FW \cite{FW}. This is still open, but on the other hand such lattices are known to have the fixed point property for actions on \emph{finite-dimensional} CAT(0) cube complexes \cite{CFI}, and in particular cannot act properly \emph{and} cocompactly on a median graph. 

It is also shown in \cite{wise} (and investigated further in \cite{hagen}) that admitting a geometric action on a median graph is not even preserved under commensurability. Namely, the $(3,3,3)$-triangle group (which coincides with the symmetry group of the tessellation of the Euclidean plane by equilateral triangles) does not act geometrically on a median graph, even though it contains a free abelian subgroup of finite index. As another source of interesting examples, it is worth mentioning that, as a consequence of \cite{graphmanifolds2} and \cite{graphmanifolds}, there exist quasi-isometric fundamental groups of graph manifolds such that one admits a proper and cocompact action on a median graph while the other does not. 

In the opposite direction, acting properly on a median graph is preserved under commensurability, since, by a classical argument, if a group $G$ contains a finite-index subgroup $H$ acting (properly) on some graph $X$ then $G$ naturally acts (properly) on $X^{[G:H]}$.\\

The above discussion shows that the property of acting properly and cocompactly on median graphs is not a quasi-isometry invariant. In this article, we observe that acting properly on median graphs is not quasi-isometry invariant either, answering Question \ref{q}. More precisely, we identify a broad source of counterexamples coming from bounded central extensions of groups acting properly (and cocompactly) on median graphs. Let us mention two specific instances of this phenomenom. The first one deals with hyperbolic groups. 

\begin{theorem}
\label{thm:main}

Let $G$ be a hyperbolic group that acts properly (and cocompactly) on a median graph. Suppose that its Schur multiplier $\HH_2(G; \mathbb{Z})$ is infinite. Then $G$ admits a central extension $G_1$ that does not act properly on a median graph, but is quasi-isometric to $G_2 := G \times \mathbb{Z}$, which acts properly (and cocompactly) on a median graph.
\end{theorem}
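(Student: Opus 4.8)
The plan is to take $G_1$ to be a central extension of $G$ by $\mathbb{Z}$ whose Euler class has infinite order, to use hyperbolicity of $G$ to see that this extension is quasi-isometrically trivial, and to obstruct any proper action of $G_1$ on a median graph by combining a homological property of the central element with the geometry of central isometries of CAT(0) cube complexes. To construct $G_1$: since $G$ is hyperbolic it is of type $\mathrm{FP}_\infty$, so $\HH_1(G;\mathbb{Z})$ and $\HH_2(G;\mathbb{Z})$ are finitely generated; as $\HH_2(G;\mathbb{Z})$ is infinite it has positive free rank, and since $\Ext^1_{\mathbb{Z}}(\HH_1(G;\mathbb{Z}),\mathbb{Z})$ is finite, the universal coefficient theorem provides an element $\alpha \in \HH^2(G;\mathbb{Z})$ of infinite order. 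Let $1 \to \mathbb{Z} = \langle z \rangle \to G_1 \xrightarrow{\pi} G \to 1$ be the central extension with Euler class $\alpha$; it is again finitely presented, and we set $G_2 := G \times \mathbb{Z}$, the extension with vanishing Euler class.

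The group $G_2$ acts properly (and cocompactly, if $G$ does) on a median graph: if $G$ acts so on a median graph $X$, then $G \times \mathbb{Z}$ acts componentwise on the Cartesian product $X \,\square\, L$, where $L$ is the bi-infinite line, and a Cartesian product of median graphs is median. For $G_1 \simeq G_2$, recall that for a hyperbolic group the comparison map $\HH^2_b(G;\mathbb{R}) \to \HH^2(G;\mathbb{R})$ is surjective (Gromov, Mineyev), so the image of $\alpha$ in $\HH^2(G;\mathbb{R})$ is represented by a bounded cocycle; a standard rounding argument then shows that $\alpha$ itself lies in the image of $\HH^2_b(G;\mathbb{Z}) \to \HH^2(G;\mathbb{Z})$. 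Hence $G_1$ can be presented as $\mathbb{Z} \times G$ with a bounded cocycle, and the tautological bijection onto $\mathbb{Z} \times G$ with the product word metric is a quasi-isometry; this is the classical fact that a central extension with bounded Euler class is quasi-isometric to the associated direct product. Thus $G_1 \simeq G_2$.

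It remains to show that $G_1$ admits no proper action on a median graph. On the homological side, the five-term exact sequence of the central extension, $\HH_2(G_1) \to \HH_2(G) \xrightarrow{\partial} \langle z \rangle \to \HH_1(G_1) \to \HH_1(G) \to 0$, has connecting map $\partial$ given by evaluation against $\alpha$; since $\alpha$ has infinite order $\partial$ is non-zero, so $\im \partial = d\langle z\rangle$ for some $d \geq 1$, and therefore $z^d \in [G_1, G_1]$. In particular $\psi(z) = 0$ for every homomorphism $\psi$ from $G_1$ to a torsion-free abelian group. Suppose now $G_1$ acts properly on a median graph $Y$. By properness $z$ has infinite order and unbounded orbits; using semisimplicity of cubical isometries together with the product decomposition $\mathrm{Min}(z) \cong \gamma' \times Y'$ of its minimal set (on which $z$ acts as a translation of the first factor, and which every element of $G_1$ preserves since it commutes with $z$), one extracts a cardinal direction skewered by $z$ and hence a $G_1$-action on a line in which $z$ translates non-trivially. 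The image of this action centralises a non-trivial translation, so it lies in the translation subgroup, giving a homomorphism $\phi : G_1 \to \mathbb{R}$ with $\phi(z) \neq 0$. This contradicts $z^d \in [G_1, G_1]$, so no such action exists.

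The main obstacle is precisely this last step: extracting, from a proper --- hence possibly non-cocompact and possibly infinite-dimensional --- action on a median graph, a genuine $G_1$-equivariant homomorphism onto a line that detects $z$. In the finite-dimensional case this is controlled by Haglund's semisimplicity theorem together with the standard machinery of minimal sets and hyperplanes, so the real work is either to reduce to that setting or to carry out the argument directly in arbitrary dimension. By comparison, the homological computation and the bounded-cohomology input are routine.
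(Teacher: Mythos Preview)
Your approach is essentially the paper's, and the homological and quasi-isometry steps are correct. The only gap is the one you explicitly identify at the end: extracting, from a proper action on a possibly infinite-dimensional median graph, a homomorphism $G_1 \to \mathbb{Z}$ (or $\mathbb{R}$) that is non-zero on $z$. The paper does not attempt your Min-set / cardinal-direction argument by hand; it simply quotes as a black box the following theorem of \cite{CFTT}, valid with no dimension hypothesis: if a group acts on a median graph and $A$ is a finitely generated normal abelian subgroup all of whose non-trivial elements have unbounded orbits, then $A$ is a direct factor in a finite-index subgroup. Applied with $A = \langle z \rangle$ this immediately yields the homomorphism you want (project the finite-index overgroup onto its $\langle z\rangle$-factor). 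The paper then derives the contradiction cohomologically, observing that $\alpha_{\mathbb{Q}} \neq 0$ forces the restriction of $\alpha$ to every finite-index subgroup to be non-zero, hence no such virtual splitting exists; your five-term-sequence computation that $z^d \in [G_1,G_1]$ is an equivalent, and arguably more direct, packaging of the same obstruction.

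So the proof is complete once you replace the Min-set sketch with the citation to \cite{CFTT} (see also \cite[Proposition~11.2.1]{MedianBook}). Incidentally, the paper's concluding section isolates exactly the property you are after under the name \emph{loxodromic indicability}: the centraliser of every loxodromic isometry admits a homomorphism to $\mathbb{Z}$ detecting it. The cited references establish this for arbitrary median graphs, which is precisely what closes your gap; Haglund's semisimplicity alone does hold in infinite dimension, but the passage from an axis for $z$ to a $G_1$-equivariant line action is the non-automatic step.
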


Examples to which this criterion applies include certain hyperbolic one-relator groups (such as surface groups) and hyperbolic von Dyck groups. 

\begin{remark}
As pointed out to us by Yves Cornulier, it is worth mentioning that \cite[Example 6.A.10]{FW} shows that the central extension of a hyperbolic surface group $\pi_1(\Sigma)$ inside $\widetilde{\mathrm{PSL}_2(\mathbb{R})}$, which is well-known to be quasi-isometric to $\pi_1(\Sigma) \times \mathbb{Z}$ (this follows from the results of Gersten and Neumann--Reeves we use in this paper, see also \cite{dastessera} for a more ad-hoc argument), does not act \emph{metrically} properly on a  median graph. In this article, an action of a group $G$ on a graph $X$ is said to be proper if it is properly discontinuous when both the group and the (vertex-set of the) graph are endowed with the discrete topology, which amounts to saying that vertex-stabilisers are finite. The action $G \curvearrowright X$ is metrically proper if $\{g \in G \mid d(x,gx) \leq R \}$ is finite for all $x \in X$ and $R \geq 0$. Thus, metrically proper actions are more restrictive than proper actions, and so the conclusion of Theorem \ref{thm:main} is stronger than that of \cite{FW} in this case.
\end{remark}

Our second source of specific examples comes from groups acting on the circle. For instance, if $\overline{T} \leq \mathrm{Homeo}(\mathbb{R})$ denotes the lift of Thompson's group $T \leq \mathrm{Homeo}(\mathbb{S}^1)$, then:

\begin{theorem}\label{thm:mainTwo}
The group $\overline{T}$ does not act properly on any median graph, and every action on a median graph with finite cubical dimension has a finite orbit. Nevertheless, $\overline{T}$ is quasi-isometric to $T \times \mathbb{Z}$, which acts properly on a median graph.
\end{theorem}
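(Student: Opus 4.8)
I would set things up via the central extension
\[ 1 \longrightarrow \mathbb{Z} \longrightarrow \overline{T} \longrightarrow T \longrightarrow 1, \]
whose class in $\HH^2(T;\mathbb{Z})$ is the Euler class $e$ of $T \curvearrowright \mathbb{S}^1$; write $z$ for a generator of the central $\mathbb{Z}$. Three structural facts come first. Restricting $e$ to the cyclic subgroup $\langle r_n\rangle \cong \mathbb{Z}/n$ generated by the ``rotation by $1/n$'' shows that $e$ restricts to a generator of $\HH^2(\mathbb{Z}/n;\mathbb{Z}) \cong \mathbb{Z}/n$ for all $n$ (the lift $\widetilde r_n$ satisfies $\widetilde r_n^{\,n}=z$); hence $e$ is not divisible by any $m\ge 2$, i.e.\ $e$ is a primitive class in $\HH^2(T;\mathbb{Z})$. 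Feeding $\HH_1(T;\mathbb{Z})=0$ and this primitivity into the five-term exact sequence of the extension gives $\HH_1(\overline{T};\mathbb{Z})=0$, so $\overline{T}$ is \emph{perfect}. Since $T$ is simple, every normal subgroup of $\overline{T}$ is either one of the infinite-index subgroups $\langle z^k\rangle$, or maps onto $T$ and hence (by perfectness) equals $\overline{T}$; thus $\overline{T}$ has \emph{no proper finite-index subgroup}. It is also finitely presented, being a central $\mathbb{Z}$-extension of the finitely presented group $T$. For the quasi-isometry statement: by Ghys the Euler class $e$ is bounded, i.e.\ lies in the image of $\HH^2_b(T;\mathbb{R})\to\HH^2(T;\mathbb{R})$, so Gersten's criterion (see also Neumann--Reeves) applies to the finitely presented group $T$ and yields $\overline{T}\simeq_{QI} T\times\mathbb{Z}$; and $T$ acts properly on a median graph (cubulation of Thompson-like groups, Farley), so $T\times\mathbb{Z}$ acts properly on the Cartesian product of that median graph with a bi-infinite line, which is again a median graph. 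This is the last assertion.

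Next I would treat the finite-dimensional case. Suppose $\overline{T}$ acts on a median graph $X$ of finite cubical dimension. By Haglund's semisimplicity theorem, $z$ is elliptic or hyperbolic. If $z$ is hyperbolic, its minimal set $\operatorname{Min}(z)$ is a nonempty convex subcomplex splitting as $C\times\ell$ with $z$ translating the combinatorial line $\ell$; since $z$ is central, $\operatorname{Min}(z)$ is $\overline{T}$-invariant, so $\overline{T}$ acts on $\ell$, giving a homomorphism $\overline{T}\to\operatorname{Isom}(\ell)$ sending $z$ to a nontrivial translation --- impossible, since every homomorphism from a perfect group to $\operatorname{Isom}(\ell)\cong\mathbb{Z}\rtimes\mathbb{Z}/2$ is trivial. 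Hence $z$ is elliptic, and $\operatorname{Fix}(z)$ is a nonempty $\overline{T}$-invariant convex (hence finite-dimensional median) subcomplex on which $\overline{T}$ acts through $T=\overline{T}/\langle z\rangle$. Using the known fixed-point-type property of Thompson's group $T$ --- that every action of $T$ on a finite-dimensional median graph has a finite orbit --- one obtains a finite $T$-orbit inside $\operatorname{Fix}(z)$, which is the desired finite $\overline{T}$-orbit in $X$. In particular, $\overline{T}$ being infinite with no proper finite-index subgroup, it cannot act properly on a finite-dimensional median graph.

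It then remains to rule out a proper action $\overline{T}\curvearrowright X$ on an \emph{arbitrary} median graph. First, vertex stabilisers are finite while $z$ has infinite order, so $z$ fixes no vertex. The plan is to upgrade this to a nontrivial homomorphism $\overline{T}\to\mathbb{Z}$: after a standard reduction (passing to an essential, minimal $\overline{T}$-invariant subcomplex), one shows that the fixed-point-free central isometry $z$ has a well-defined ``direction'', concretely that some halfspace is skewered, $zA\subsetneq A$, producing a chain $\cdots\supsetneq A\supsetneq zA\supsetneq z^2A\supsetneq\cdots$. Since $z$ is central, $\overline{T}$ permutes the family of $z$-skewered halfspaces compatibly with the $z$-action, and reading off the ``$z$-coordinate'' of this pocset yields a nontrivial isometric action of $\overline{T}$ on a line, with $z$ acting by a nontrivial translation. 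As above this contradicts perfectness of $\overline{T}$, completing the proof.

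\textbf{Main obstacle.} The structural preliminaries, the quasi-isometry, and the finite-dimensional half are routine given the cited results (Ghys, Gersten, Neumann--Reeves, Farley, Haglund, and the fixed-point property of $T$). The crux is the last paragraph: extracting a genuine homomorphism to $\mathbb{Z}$ --- a well-defined direction --- from a fixed-point-free \emph{central} isometry of a median graph of \emph{unbounded} cubical dimension, where Haglund's semisimplicity is unavailable and $z$ could a priori behave ``parabolically''. This is precisely the point where the bounded-central-extension machinery of the paper must be brought in, and the argument must genuinely use that $e$ is nontrivial rationally (it is primitive), not merely that $z$ has infinite order.
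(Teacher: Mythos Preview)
Your treatment of the quasi-isometry $\overline{T}\simeq_{QI} T\times\mathbb{Z}$ (bounded Euler class + Gersten) and of the proper cubulation of $T\times\mathbb{Z}$ (Farley) matches the paper. For the finite-orbit property in finite cubical dimension, the paper does not give a proof at all; it simply refers to \cite{anthonyV} and the forthcoming \cite{ToCome}. Your reduction via Haglund semisimplicity to the (external) finite-orbit property of $T$ is reasonable but equally black-boxed.

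The real divergence is in the ``no proper action'' clause, and here you have a genuine gap that you yourself flag. You try to manufacture, directly from a proper action on an arbitrary median graph, a nontrivial homomorphism $\overline{T}\to\mathbb{Z}$ by skewering halfspaces with the central element $z$; but without semisimplicity you cannot rule out that $z$ acts with unbounded orbits yet skewers no hyperplane, and your ``reading off the $z$-coordinate'' step is not well-defined in that generality. The paper avoids this completely by invoking the ready-made result of \cite{CFTT} (Proposition~\ref{prop:virtualsplitting} here), valid on \emph{all} median graphs: if $\overline{T}$ acted properly, the central $\mathbb{Z}$ would have unbounded orbits and hence be a virtual direct factor. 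One then only needs that the extension does not virtually split. The paper obtains this not through your primitivity/rational-nonvanishing route (which also works, via Proposition~\ref{prop:criterion}), but by the more elementary observation that $T$ is simple with torsion, hence not virtually torsion-free, while $\overline{T}\leq\mathrm{Homeo}^+(\mathbb{R})$ is torsion-free; a torsion-free central extension of a group that is not virtually torsion-free can never virtually split. The paper also records a second, independent one-line argument: $\overline{T}$ contains a copy of $\mathbb{Q}$ \cite{QinTbar}, and no group containing $\mathbb{Q}$ acts properly on a median graph \cite{Haglund}. Either of these replaces your entire last paragraph.
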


See Example~\ref{ex:ThompsonLift} for more details. The finite-orbit property satisfied by $\overline{T}$ can be proven by arguments similar to those in \cite{anthonyV}; see \cite{ToCome} for detailed proofs.  

Theorem~\ref{thm:mainTwo} does not only answer Question~\ref{q} but also provides valuable information regarding the following broad question: To which extent can two quasi-isometric groups have different median geometries? Such a question is illustrated in \cite{questions} by

\begin{question}[{\cite[Question 13]{questions}}]
\label{q2}
Does there exist a group acting properly and cocompactly on a median graph but quasi-isometric to a group satisfying Kazhdan's property (T)?
\end{question}

Indeed, because Kazhdan's property (T) can be characterised as a fixed-point property on median spaces \cite{Tmedian}, the finite-orbit property on median graph (of finite cubical dimension) can be thought of as a discrete (and finite-dimensional) version of Kazhdan's property (T). From this point of view, while Question \ref{q2} is still wide open, Theorem \ref{thm:mainTwo} hints at the fact that it might have a positive answer. Moreover, it is worth mentioning that every hyperbolic group \cite[Theorem~1.8]{MR2979855} (and more generally colourable hierarchically hyperbolic group \cite{Petyt}) is quasi-isometric to a median graph, which implies that, for instance, uniform lattices in $\mathrm{Sp}(n,1)$ satisfy Kazhdan's property (T) and are quasi-isometric to median graphs.\\

Finally, let us observe that, from our examples, it is easy to construct many new ones in the following sense:

\begin{corollary}
\label{cor:many}
\begin{enumerate}
    \item There exists an infinite family $\{ G_i \}_{i \geq 1}$ of pairwise non-isomorphic quasi-isometric groups such that $G_1$ acts properly and cocompactly on a median graph, while $G_i$ does not act properly on a median graph, for any $i \geq 2$.
    \item There exists an uncountable family $\{ G_i \}_{i \in I}$ of pairwise non-isomorphic quasi-isometric groups such that $G_{i_0}$ acts properly on a median graph, while $G_i$ does not, for any $i \neq i_0$.
\end{enumerate}
\end{corollary}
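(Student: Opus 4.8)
The plan is to realise, in both cases, quasi-isometrically trivial central extensions by $\mathbb{Z}$, via the criterion already exploited in the paper (Gersten, Neumann--Reeves): a central extension $1 \to \mathbb{Z} \to \widetilde{G} \to G \to 1$ of a finitely generated group has $\widetilde{G}$ quasi-isometric to $G \times \mathbb{Z}$ precisely when its Euler class in $\HH^2(G;\mathbb{Z})$ is bounded (equivalently, when the central $\mathbb{Z}$ is undistorted). The passage from the single examples of Theorems~\ref{thm:main}--\ref{thm:mainTwo} to the families of the corollary then amounts to varying the Euler class, respectively the base group.

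For part~(1) I would take $G := \pi_1(\Sigma_g)$, a closed hyperbolic surface group with $g \geq 2$: it acts properly and cocompactly on a median graph, has $\HH_2(G;\mathbb{Z}) \cong \mathbb{Z}$ so that Theorem~\ref{thm:main} applies, and $\HH^2(G;\mathbb{Z}) \cong \mathbb{Z}$ is generated by a bounded class $e$ (the Milnor--Wood inequality, or hyperbolicity of $G$). Set $G_1 := G \times \mathbb{Z}$, and for $i \geq 2$ let $G_i$ be the central extension of $G$ by $\mathbb{Z}$ with Euler class $(i-1)e$. Then $G_1$ acts properly and cocompactly on a median graph; each $G_i$ is quasi-isometric to $G \times \mathbb{Z} = G_1$ since $(i-1)e$ is bounded; and for $i \geq 2$ the class $(i-1)e$ is non-torsion, so the argument proving Theorem~\ref{thm:main} shows that $G_i$ does not act properly on a median graph. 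Finally they are pairwise non-isomorphic: the five-term exact sequence of the extension gives $\HH_1(G_i;\mathbb{Z}) \cong \mathbb{Z}^{2g} \oplus \mathbb{Z}/(i-1)\mathbb{Z}$ for $i \geq 2$ and $\HH_1(G_1;\mathbb{Z}) \cong \mathbb{Z}^{2g+1}$, and these abelian groups are pairwise non-isomorphic.

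For part~(2) the aim is \emph{uncountably} many such extensions. Since a finitely presented group has countable $\HH^2(-;\mathbb{Z})$, hence only countably many central extensions by $\mathbb{Z}$, one must leave the finitely presented world: I would replace $\pi_1(\Sigma_g)$ by a finitely generated, infinitely presented group $G$ that still acts properly (necessarily not cocompactly) on a median graph and has $\HH^2(G;\mathbb{Z})$ uncountable. The natural candidate is a Bestvina--Brady group such as $G := \ker(F_2 \times F_2 \twoheadrightarrow \mathbb{Z})$, which acts freely on the product of two trees (a $2$-dimensional median graph) and has $\HH_2(G;\mathbb{Z})$ free abelian of infinite rank, so that $\HH^2(G;\mathbb{Z}) \supseteq \Hom(\HH_2(G;\mathbb{Z}),\mathbb{Z})$ is uncountable. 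One then sets $G_{i_0} := G \times \mathbb{Z}$ (which acts properly on a median graph) and lets $\{G_i\}_{i \neq i_0}$ run over an uncountable family of central extensions of $G$ by $\mathbb{Z}$ whose Euler classes are bounded and lie in pairwise distinct $\Aut(G) \times \{\pm 1\}$-orbits; these are pairwise non-isomorphic (as $G$ is centerless with countable outer automorphism group), quasi-isometric to $G_{i_0}$, and do not act properly on a median graph.

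The main obstacle is twofold, and lies entirely in part~(2). First — the step I would spend most effort on — one must produce \emph{uncountably many bounded} classes in $\HH^2(G;\mathbb{Z})$: the comparison map $\HH^2_b(G;\mathbb{R}) \to \HH^2(G;\mathbb{R})$ is surjective when $G$ is hyperbolic, but its image can be small for a general $G$ with infinite-dimensional $\HH^2$, so the base group must be chosen to be non-amenable with a second bounded cohomology large enough to surject onto an uncountable-dimensional part of $\HH^2(G;\mathbb{R})$ (whether $\ker(F_2\times F_2\to\mathbb{Z})$ itself works, or one needs a different $FP$-group, is exactly the point to be settled). Second, the non-cubulability of these extensions must be re-established outside the hyperbolic setting of Theorem~\ref{thm:main}: that argument turns on the interaction between the central $\mathbb{Z}$ and the hyperplanes of a hypothetical proper action, and one must verify it does not use hyperbolicity essentially — plausibly it needs only that the quotient $G$ be geometrically tame, e.g. a subgroup of a right-angled Artin group. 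Should that fail, the fallback is to feed an already-known uncountable quasi-isometry class of cubulated groups into the same central-extension machine; the remaining verifications (distinguishing the extensions up to isomorphism) are then routine.
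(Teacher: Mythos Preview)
Your argument for part~(1) is correct but takes a different route from the paper. You vary the Euler class over $\HH^2(\pi_1(\Sigma_g);\mathbb{Z})\cong\mathbb{Z}$ and distinguish the resulting extensions by their abelianisations; the paper instead fixes a single non-split extension $G_2$ (quasi-isometric to the cubulated $G_1=G\times\mathbb{Z}$) and takes products $G_2\times H_i$ with a family $\{H_i\}$ of pairwise non-isomorphic quasi-isometric cocompactly cubulated groups (e.g.\ free groups of varying rank). The key lemma there is that the central $\mathbb{Z}$ in $G_2\times H_i$ still fails to split virtually, so Proposition~\ref{prop:virtualsplitting} again rules out a proper action. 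Your approach is arguably more elegant for part~(1); the paper's product trick, however, is what makes part~(2) go through painlessly.

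For part~(2) your primary strategy has a real gap, and you correctly identify it: for a group like $\ker(F_2\times F_2\to\mathbb{Z})$ there is no available result guaranteeing that the comparison map hits an uncountable set in $\HH^2$, and this is genuinely unknown. (Your second worry, by contrast, is unfounded: Proposition~\ref{prop:criterion} uses no hyperbolicity --- it only needs $\alpha_\mathbb{Q}\neq 0$ --- so non-cubulability would follow immediately once you had the classes.) Your stated fallback is exactly what the paper does, but you would still need to exhibit an uncountable quasi-isometry class of properly cubulated groups. The paper takes the family $\{H_i\}_{i\in I}$ to consist of pairwise non-isomorphic \emph{finite} central extensions of the lamplighter group $\mathbb{Z}/2\mathbb{Z}\wr\mathbb{Z}$ (or of the Grigorchuk group), which are all quasi-isometric to the base and inherit proper actions on median graphs; it then sets the desired family to be $G_1\times H_{i_0}$ together with all the $G_2\times H_i$. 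Only one bounded class is ever needed, sidestepping your boundedness obstacle entirely.
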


Note that a group acting properly and cocompactly on a median graph is finitely presented, and finite presentability is well-known to be quasi-isometry invariant (see e.g. \cite[Exercice~23]{MR1086650} or \cite{alonso}), therefore it is not possible to upgrade the first item of Corollary \ref{cor:many} to an uncountable family. \\

Although Question \ref{q} has a negative answer, there are several variations and generalizations thereof that could still be investigated: We discuss these in Section \ref{s:conclusion}. In particular, our counterexamples lead naturally to the definition of \emph{almost median graphs}, which seem more suitable for studying groups up to quasi-isometry. \\

\textbf{Acknowledgements.} The authors wish to thank Jason Behrstock, Yves Cornulier, Alessandro Sisto and Abdul Zalloum for useful comments.

\section{Bounded central extensions}
\label{s:obstruction}

We start by reviewing the necessary cohomological tools: See \cite[Chapter IV]{brownbook} for more details. Let $R$ be a ring; in our case we will only deal with $R = \mathbb{Z}, \mathbb{Q}$ and $\mathbb{R}$. Given a group $G$, we denote its second cohomology with coefficients in $R$ by $\HH^2(G; R)$. The inclusions $\mathbb{Z} \to \mathbb{Q} \to \mathbb{R}$ induce change of coefficient maps in cohomology. Given $\alpha \in \HH^2(G; \mathbb{Z})$, we denote by $\alpha_R \in \HH^2(G; R)$ its image under these maps, where $R = \mathbb{Q}$ or $\mathbb{R}$. Note that $\HH^2(G; \mathbb{Z}) \to \HH^2(G; \mathbb{Q})$ is in general not injective (as will be clear from Proposition \ref{prop:criterion}), however $\HH^2(G; \mathbb{Q}) \to \HH^2(G; \mathbb{R})$ is always injective: This is an immediate consequence of the Universal Coefficient Theorem \cite[Chapter I.0]{brownbook}.

Recall that $\HH^2(G; R)$ parametrizes central extensions of $G$ by $R$. More precisely, let $\alpha \in \HH^2(G; R)$, and let $\omega : G^2 \to R$ be an inhomogeneous cocycle representing it. One can then associate a central extension:
$$1 \to R \to E_\omega \to G \to 1,$$
and every central extension arises this way. Up to a suitable notion of equivalence, $E_\omega$ depends only on its class $\alpha$, therefore in the sequel we will use the notation $E_\alpha$ instead. In particular, the extension splits if and only if $\alpha = 0$.

Cohomology is functorial, in that group homomorphisms induce maps in cohomology. For our purposes, it will be sufficient to consider the restriction to a subgroup $H \leq G$, denoted $\res : \HH^2(G; R) \to \HH^2(H; R)$, which is given by restricting cocycles from $G^2$ to $H^2$. At the level of central extensions we have the following commutative diagram:
\[\begin{tikzcd}
	1 & R & {E_{\res(\alpha)}} & H & 1 \\
	1 & R & {E_{\alpha}} & G & 1
	\arrow["\id", from=1-2, to=2-2]
	\arrow[from=1-2, to=1-3]
	\arrow[from=1-1, to=1-2]
	\arrow[from=1-3, to=1-4]
	\arrow[from=1-4, to=1-5]
	\arrow[from=2-1, to=2-2]
	\arrow[from=2-2, to=2-3]
	\arrow[from=2-3, to=2-4]
	\arrow[from=2-4, to=2-5]
	\arrow[hook, from=1-3, to=2-3]
	\arrow[hook, from=1-4, to=2-4]
\end{tikzcd}\]
That is, $E_{\res(\alpha)}$ is simply the preimage of $H$ under the quotient $E_\alpha \to G$. \\

A class $\alpha \in \HH^2(G; R)$ is said to be \emph{bounded} if it is admits a cocycle representative whose image is a bounded subset of $R$, with respect to the Euclidean norm. The relevance of bounded classes to our purposes is due to the following:

\begin{proposition}[Gersten \cite{gersten}]
Let $G$ be a finitely generated group, and $\alpha \in \HH^2(G; \mathbb{Z})$ be a bounded class. Then $E_\alpha$ is quasi-isometric to $G \times \mathbb{Z}$.
\end{proposition}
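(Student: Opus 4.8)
The plan is to build an explicit quasi-isometry $E_\alpha \to G \times \mathbb{Z}$ using a bounded set-theoretic section of the extension. Fix an inhomogeneous cocycle $\omega \colon G^2 \to \mathbb{Z}$ representing $\alpha$ whose image is contained in $[-C, C]$ for some $C \geq 0$; this exists by hypothesis. Recall that $E_\alpha$ can be modelled concretely on the underlying set $\mathbb{Z} \times G$ with multiplication $(m, g)(n, h) = (m + n + \omega(g,h), gh)$; the central copy of $\mathbb{Z}$ sits as $\mathbb{Z} \times \{1\}$ and the quotient map is the projection to $G$. The first step is to fix a finite generating set $S$ of $G$ and check that $E_\alpha$ is finitely generated: one takes the lifts $(0, s)$ for $s \in S$ together with the central generator $(1, 1)$, using that $\mathbb{Z}$ is central and finitely generated. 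This equips $E_\alpha$ with a word metric, and $G \times \mathbb{Z}$ with the word metric coming from $S$ together with the standard generator of $\mathbb{Z}$.

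Next I would define the candidate map $\Phi \colon E_\alpha \to G \times \mathbb{Z}$ to be simply $\Phi(m, g) = (g, m)$, i.e. the identity on the underlying sets, which is a bijection; the point is that it need not be a homomorphism but should be a quasi-isometry. To see this, I would compare word lengths. For surjectivity of the quasi-isometry condition there is nothing to do since $\Phi$ is a bijection, so it remains to show that $\Phi$ and $\Phi^{-1}$ are both coarsely Lipschitz. The heart of the matter is the following: if $(m, g) = (0, s_1)^{\varepsilon_1} \cdots (0, s_k)^{\varepsilon_k} \cdot (1,1)^{r}$ is an expression in $E_\alpha$ of length $\ell = \sum |\varepsilon_i| + |r|$, then multiplying out the $\omega$-twists, the $G$-coordinate is the product $s_1^{\varepsilon_1} \cdots s_k^{\varepsilon_k}$, which has $S$-length at most $\sum |\varepsilon_i| \leq \ell$, while the $\mathbb{Z}$-coordinate $m$ differs from $r$ by a sum of at most $\sum |\varepsilon_i|$ values of $\omega$ (and of $\omega$ evaluated on inverses, which is also bounded), hence $|m| \leq |r| + C' \sum |\varepsilon_i| \leq (1 + C') \ell$ for a constant $C'$ depending only on $C$. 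This gives $d_{G \times \mathbb{Z}}(\Phi(m,g), \Phi(0,1)) \leq (2 + C') \, d_{E_\alpha}((m,g),(0,1))$, and by homogeneity (left-invariance of both word metrics) the analogous bound between any two points; so $\Phi$ is Lipschitz. The same bookkeeping run in reverse — writing a word for $g$ in $S$ and appending central generators — shows $\Phi^{-1}$ is Lipschitz, with the boundedness of $\omega$ again absorbing the discrepancy. Therefore $\Phi$ is a bi-Lipschitz bijection, in particular a quasi-isometry.

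The main obstacle, and the only place the boundedness hypothesis is genuinely used, is controlling the accumulated cocycle contribution to the central coordinate: without a bound on $\omega$ the central coordinate of a length-$\ell$ word could grow much faster than linearly in $\ell$ (indeed this is exactly why non-bounded classes such as the one defining the integer Heisenberg group give central extensions that are \emph{not} quasi-isometric to the direct product), so the estimate $|m| \leq (1 + C')\ell$ would fail. One technical point to handle carefully is that the expansion of a product of $k$ lifts introduces a telescoping sum of cocycle values, and one must also account for the identity $\omega(g, g^{-1})$ when rewriting inverses $(0,s)^{-1}$ in the normal form $(m, g)$; both of these involve only finitely many, or uniformly bounded, values of $\omega$, so each contributes $O(1)$ per generator and the linear bound survives. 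Everything else is routine bookkeeping with word metrics, and I would not belabour it.
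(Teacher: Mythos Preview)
The paper does not prove this proposition; it is stated with attribution to Gersten \cite{gersten} and used as a black box. Your argument is the standard one and is essentially correct: model $E_\alpha$ on the set $\mathbb{Z}\times G$ with the $\omega$-twisted product, take the identity of underlying sets as the candidate bijection to $G\times\mathbb{Z}$, and use $|\omega|\leq C$ to bound the accumulated cocycle error linearly in the word length. One step deserves a little more care: your appeal to ``homogeneity (left-invariance of both word metrics)'' to pass from the estimate at the identity to arbitrary pairs is not automatic, because $\Phi$ is not a homomorphism. The easy fix is to note that in $E_\alpha$ one has $(m_1,g_1)^{-1}(m_2,g_2)=(m_2-m_1+\epsilon,\,g_1^{-1}g_2)$ with $|\epsilon|$ bounded in terms of $C$, so $\Phi(x)^{-1}\Phi(y)$ and $\Phi(x^{-1}y)$ differ by a uniformly bounded element; hence the estimate at the identity transfers with only an additive constant. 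With that adjustment the proof goes through.
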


The converse does not hold: The first counterexample was found by Frigerio and Sisto \cite{frigeriosisto}, and more recently Ascari and Milizia provided a finitely presented one \cite{ascarimilizia}. Let us point out that $\alpha \in \HH^2(G; \mathbb{Z})$ is bounded if and only if $\alpha_\mathbb{R}$ is bounded \cite[Lemma 2.8]{frigeriosisto}. This is especially useful given all that is known about bounded cohomology with real coefficients \cite{frigerio}.

One classical context in which bounded classes arise is via actions on the circle \cite{ghys}, which we will exploit in Subsection \ref{circle}. Another easy way to tell that a class is bounded is to apply the following:

\begin{theorem}[Neumann--Reeves \cite{surjectivity}]
\label{thm:surjectivity}

Let $G$ be a hyperbolic group. Then every class $\alpha \in \HH^2(G; \mathbb{Z})$ is bounded.
\end{theorem}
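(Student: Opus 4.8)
The plan is to reduce the statement to the surjectivity, in degree two, of the comparison map $c \colon \HH^2_b(G;\mathbb{R}) \to \HH^2(G;\mathbb{R})$ from bounded to ordinary cohomology, which holds for every hyperbolic group. The reduction has two steps. First, by \cite[Lemma 2.8]{frigeriosisto} (already used above) a class $\alpha \in \HH^2(G;\mathbb{Z})$ is bounded if and only if $\alpha_{\mathbb{R}} \in \HH^2(G;\mathbb{R})$ is bounded, so it suffices to treat real coefficients. Second, a class $\beta \in \HH^2(G;\mathbb{R})$ is bounded exactly when it lies in the image of $c$: a bounded cocycle representing $\beta$ defines a class in $\HH^2_b(G;\mathbb{R})$ mapping onto $\beta$; conversely, if $\beta = c(\gamma)$ with $\gamma$ represented by a bounded cocycle $\theta$, then $\theta$ is itself a bounded representative of $\beta$. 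So everything reduces to showing that $c$ is surjective when $G$ is hyperbolic.

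For this I would straighten cocycles using negative curvature. Fix a Rips complex $P = P_d(G)$ with $d$ large: it is contractible, carries a proper cocompact $G$-action, has all simplices of diameter at most $2d$, and computes $\HH^*(G;\mathbb{R})$. Hyperbolicity further supplies---an idea going back to Gromov and established in general by Mineyev---a $G$-equivariant, antisymmetric, quasi-geodesic \emph{homological bicombing} $q$ of $P$, assigning to each ordered pair of vertices $a,b$ a $1$-chain $q(a,b)$ from $a$ to $b$, whose triangle defect $q(a,b)+q(b,c)+q(c,a)$ is always the boundary of a $2$-chain $W(a,b,c)$ of uniformly bounded $\ell^1$-norm. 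Then, given a $G$-invariant cocycle $\omega$ on $P$ representing a class $\beta \in \HH^2(G;\mathbb{R})$, the assignment $(g,h) \mapsto \omega(W(x_0, gx_0, ghx_0))$ (for a fixed basepoint $x_0$) is an inhomogeneous $2$-cocycle on $G$ representing $\beta$ (because $q$ induces a chain map from the bar resolution of $G$ to $C_*(P)$ homotopic to the canonical one, and $P$ is contractible), and it is \emph{bounded} because $\omega$, being $G$-invariant, takes only finitely many values on the bounded-diameter simplices of $P$, while each $W(x_0, gx_0, ghx_0)$ has $\ell^1$-norm at most a fixed constant. Hence $\beta$ lies in the image of $c$. (Alternatively one can follow Neumann--Reeves \cite{surjectivity}: the central extension $E_\alpha$ is biautomatic, from which the boundedness of $\alpha$ can be extracted.)

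The main obstacle is the construction of the bicombing with a \emph{bounded} triangle defect, and this is where negative curvature is indispensable---combability does not suffice. For instance $G = \mathbb{Z}^2$ is combable, yet $\HH^2(\mathbb{Z}^2;\mathbb{Z}) \cong \mathbb{Z}$ is nonzero with unbounded generator, since the corresponding central extension is the integer Heisenberg group, which has cubic growth and so is not quasi-isometric to $\mathbb{Z}^3$. The difficulty already defeats the naive attempt: fixing a geodesic $\sigma_g$ from $1$ to $g$ for each $g$ and letting $\phi(g)$ be the integral of $\omega$ along $\sigma_g$, one finds that $\omega$ differs from a coboundary by the function sending $(g,h)$ to the $\omega$-integral over the geodesic triangle with vertices $1, g, gh$, which is controlled only by the area of that triangle and hence grows linearly in $|g| + |h|$. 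The virtue of the hyperbolic bicombing is that it is \emph{not} assembled from geodesic triangles: because its legs fellow-travel near each common vertex, the three legs of a triangle cancel away from its centre, so the triangle defect is a short cycle bounding a $2$-chain of bounded mass. Making this cancellation precise in a coarse, non-smooth setting is the technical heart of the argument.
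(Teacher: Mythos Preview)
The paper does not give its own proof of this theorem: it is stated with attribution to Neumann--Reeves \cite{surjectivity}, and the only further comment is the sentence immediately following, pointing to Mineyev's generalisation \cite{mineyev1, mineyev2}. So there is nothing to compare your argument against.

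That said, your outline is a correct sketch of the Mineyev approach, and the two-step reduction (integer to real coefficients via \cite[Lemma 2.8]{frigeriosisto}, then boundedness $\Leftrightarrow$ image of the comparison map) is exactly right. Two minor remarks. First, the reason $\omega$ takes only finitely many values on $2$-simplices of $P$ is the cocompactness of the $G$-action (finitely many orbits of simplices), not the bounded diameter per se; your conclusion is unaffected. Second, if $G$ has torsion the action on the Rips complex is not free, so representing a class in $\HH^2(G;\mathbb{R})$ by a $G$-\emph{invariant} cochain on $P$ requires a word about averaging over finite stabilisers; with real coefficients this is harmless, but it is worth saying. Your closing discussion of why combability alone fails (the $\mathbb{Z}^2$/Heisenberg example) is apt and matches the spirit of Mineyev's converse \cite{mineyev2}.
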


This has been vastly generalized by Mineyev \cite{mineyev1}, who also proved that the more general statement characterizes hyperbolic groups \cite{mineyev2}. \\

Clearly every (proper, cocompact) action of a group $G$ on a median graph defines a (proper, cocompact) product action of $G \times \mathbb{Z}$ on a median graph. Our goal is to show that, under certain conditions on $\alpha \in \HH^2(G; \mathbb{Z})$, the central extension $E_\alpha$ does not act properly on a median graph. We will use the following property of such groups, well-known for groups acting on CAT(0) spaces by semi-simple isometries \cite[Theorem~II.6.12]{bridsonhaefliger} and extended to arbitrary median graphs in \cite[Theorem~5.4]{CFTT} (see also \cite[Propositions 6.A.9 and 6.B.8]{FW}):

\begin{proposition}[\cite{CFTT}]
\label{prop:virtualsplitting}

Let $G$ be a group acting on a median graph $X$ and $A \lhd G$ a normal finitely generated abelian subgroup. If every non-trivial element in $A$ has unbounded orbits in $X$, then $A$ is a direct factor in a finite-index subgroup of $G$.
\end{proposition}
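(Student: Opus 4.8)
The plan is to adapt to median graphs the proof of the flat torus theorem for CAT(0) spaces, \cite[Theorem~II.6.12]{bridsonhaefliger}; the statements needed in this generality are carried out in \cite{CFTT} (see also \cite[Propositions~6.A.9 and 6.B.8]{FW}).

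First I would dispose of torsion: a finite-order automorphism of a median graph has bounded orbits, since the orbit of a vertex is finite, so the hypothesis forces $A$ to be torsion-free and hence $A \cong \mathbb{Z}^n$ for some $n \geq 0$; the case $n=0$ being vacuous, assume $n \geq 1$. Since automorphisms of CAT(0) cube complexes are semisimple in the combinatorial metric, each nontrivial $a \in A$ — which by hypothesis has unbounded orbits — is combinatorially hyperbolic: a power of $a$ translates along a bi-infinite combinatorial geodesic. The core of the argument is then to exploit that the elements of $A$ pairwise commute in order to produce an $A$-invariant convex subcomplex $Y \subseteq X$ carrying an $A$-equivariant isometric product decomposition $Y \cong C \times Z$, in which $A$ acts trivially on the factor $Z$ and acts on the factor $C$ coboundedly and with unbounded orbits in every nontrivial direction; in particular $A$ embeds into $\Aut(C)$, and $C$ is quasi-isometric to $\mathbb{R}^n$. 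This is the median analogue of the splitting $\operatorname{Min}(A) \cong Y \times \mathbb{E}^n$ of the classical flat torus theorem, and carrying it out with no hypothesis on the cubical dimension of $X$ is, I expect, the main obstacle. The decisive extra feature is that $Y$ and its product decomposition are \emph{canonically} associated to $A$ — the factor $C$ encodes the set of ``directions'' of $A$ — so they are preserved by the normalizer $N_G(A) = G$; restricting to the $C$-factor this yields a homomorphism $\pi \colon G \to \Aut(C)$ extending the inclusion $A \hookrightarrow \Aut(C)$, with $\pi(A)$ cobounded.

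It then remains to run the classical endgame. An automorphism of $C$ that normalizes the cobounded action of $\pi(A) \cong \mathbb{Z}^n$ must permute, up to orientation, the finitely many parallelism classes of hyperplanes of $C$, so the conjugation homomorphism $G \to \Aut(A) = \GGL_n(\mathbb{Z})$ has finite image (contained in the signed permutation matrices); passing to the corresponding finite-index subgroup $G_1 \leq G$ we may assume that $G_1$ centralizes $A$. Now $\pi(G_1)$ lies in the centralizer of the cobounded lattice $\pi(A)$ inside $\Aut(C)$, which is virtually abelian, as in the Euclidean model $\mathrm{Isom}(\mathbb{R}^n)$; after a further passage to finite index, a short argument in integral linear algebra (saturating $\pi(A)$ inside $\pi(G_1)$) combined with the facts that $A$ is central in $G_1$, that it acts trivially on the factor $Z$, and that $\ker(\pi|_{G_1})$ meets $A$ trivially, produces a finite-index subgroup of $G$ that splits as $A \times B$. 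Hence $A$ is a direct factor in a finite-index subgroup of $G$.
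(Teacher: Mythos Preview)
The paper does not supply its own proof of this proposition: it is quoted verbatim from \cite[Theorem~5.4]{CFTT}, with a pointer also to \cite[Propositions~6.A.9 and~6.B.8]{FW}, after remarking that the CAT(0) analogue is \cite[Theorem~II.6.12]{bridsonhaefliger}. So there is nothing in the paper to compare your argument against beyond those citations, and your opening sentence already names exactly the same sources.

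Your outline is a faithful sketch of the strategy one finds in those references: dispose of torsion via the finite-orbit observation, use Haglund's combinatorial semisimplicity to get axes, build the canonical $A$-invariant convex product $Y \cong C \times Z$ (the median analogue of $\operatorname{Min}(A)$), use normality of $A$ to make $G$ preserve the decomposition, and then run the Bieberbach-style endgame to pass to a finite-index subgroup centralising $A$ and split off $A$ as a direct factor. Two minor caveats worth flagging if you expand this into a self-contained proof: first, as you yourself note, producing the product decomposition with no bound on the cubical dimension of $X$ is precisely the nontrivial content of \cite{CFTT}, and your sketch defers it entirely to that reference; second, the claim that the image of $G \to \GGL_n(\mathbb{Z})$ lands in the signed permutation matrices relies on $C$ actually decomposing as a product of $n$ quasi-lines (so that its hyperplanes fall into $n$ parallelism classes), not merely on $C$ being quasi-isometric to $\mathbb{R}^n$ --- this is true here but deserves a word. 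Neither is a gap; both are places where a reader will want the cited sources.
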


Our final goal is therefore to provide a cohomological criterion that prevents this behaviour.

\begin{proposition}
\label{prop:criterion}

Let $G$ be a group, and let $\alpha \in \HH^2(G; \mathbb{Z})$. Suppose that $\alpha_\mathbb{Q} \in \HH^2(G; \mathbb{Q})$ is non-zero. Then the central extension $E_\alpha$ does not virtually split, and in particular it does not act properly on a median graph.
\end{proposition}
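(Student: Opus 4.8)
The plan is to establish the ``does not virtually split'' assertion by a transfer argument in rational cohomology, and then to obtain the statement about median graphs by feeding this into Proposition~\ref{prop:virtualsplitting}.

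Write the extension as $1 \to Z \to E_\alpha \xrightarrow{\pi} G \to 1$ with $Z \cong \mathbb{Z}$ central. First I would unwind what it means for $E_\alpha$ to virtually split along $Z$: there is a finite-index subgroup $K \leq E_\alpha$ with $Z \leq K$, together with a subgroup $C \leq K$ such that $K = Z \times C$. Put $H := \pi(K) \leq G$; since $Z \leq K$ we get $H \cong K/Z \cong C$ and $[G:H] = [E_\alpha:K] < \infty$, and moreover $K = \pi^{-1}(H)$ (as $\pi^{-1}(\pi(K)) = K\cdot Z = K$). By the compatibility of restriction with central extensions recorded in the commutative diagram above, $K = \pi^{-1}(H)$ realises the restricted class, i.e. $K \cong E_{\res(\alpha)}$ with $\res(\alpha) \in \HH^2(H;\mathbb{Z})$. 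But the splitting $K = Z \times C$, with $C \hookrightarrow K$ a section of $K \to H$, shows this restricted extension is trivial, so $\res(\alpha) = 0$ in $\HH^2(H;\mathbb{Z})$, and a fortiori $\res(\alpha_\mathbb{Q}) = \res(\alpha)_\mathbb{Q} = 0$ in $\HH^2(H;\mathbb{Q})$.

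Now comes the key point: for a finite-index subgroup $H \leq G$ the restriction map $\HH^2(G;\mathbb{Q}) \to \HH^2(H;\mathbb{Q})$ is injective, because the corestriction (transfer) map satisfies $\mathrm{cor} \circ \res = [G:H] \cdot \id$ and $[G:H]$ is invertible in $\mathbb{Q}$. Combined with the previous paragraph this forces $\alpha_\mathbb{Q} = 0$, contrary to hypothesis; hence $E_\alpha$ does not virtually split along $Z$. This is precisely why the criterion must be phrased via $\alpha_\mathbb{Q}$ and not $\alpha$: over $\mathbb{Z}$, restriction on $\HH^2$ to a finite-index subgroup need not be injective, so $\alpha \neq 0$ alone would not suffice (and indeed $\HH^2(G;\mathbb{Z}) \to \HH^2(G;\mathbb{Q})$ is typically non-injective, as this proposition shows).

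Finally, for the last clause, suppose $E_\alpha$ acted properly on a median graph $X$. To apply Proposition~\ref{prop:virtualsplitting} with $A = Z$ one needs every non-trivial $t \in Z$ to have unbounded orbits; but $t$ has infinite order, and if it had a bounded $\langle t\rangle$-orbit then, since a group with a bounded orbit on a median graph virtually fixes a vertex, a positive power $t^N$ would fix a vertex of $X$ and thus lie in a finite vertex-stabiliser --- impossible for an infinite-order element. So Proposition~\ref{prop:virtualsplitting} would make $Z$ a direct factor of a finite-index subgroup of $E_\alpha$, i.e. $E_\alpha$ would virtually split along $Z$, contradicting what was proved above. Everything cohomological here is formal once the transfer map is invoked; the one step requiring genuine care is the orbit claim, namely that an infinite-order element acting properly on a (possibly infinite-dimensional) median graph has unbounded orbits --- the semisimplicity-type statement underlying Proposition~\ref{prop:virtualsplitting}, which I would either cite or reprove via the finite cube spanned by the circumcentres of the bounded orbit.
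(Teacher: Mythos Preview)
Your argument is correct and follows essentially the same route as the paper: a virtual splitting gives $\res(\alpha)=0$ in $\HH^2(H;\mathbb{Z})$, hence $\res(\alpha_\mathbb{Q})=0$, and injectivity of restriction to a finite-index subgroup in rational cohomology (your transfer identity $\mathrm{cor}\circ\res=[G:H]\cdot\id$ is precisely what the paper invokes) forces $\alpha_\mathbb{Q}=0$. You are more explicit than the paper on the final clause, checking via semisimplicity that the central $\mathbb{Z}$ has unbounded orbits before applying Proposition~\ref{prop:virtualsplitting}, whereas the paper simply asserts that the last statement follows from that proposition.
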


This result is standard and well-known, in fact the converse holds as well, assuming that $G$ is finitely generated \cite[Lemma 5.13, Lemma 5.14]{frigeriosisto}. We include a proof for the reader's convenience.

\begin{proof}
Suppose that the extension virtually splits: There exists a subgroup $H$ of $E_\alpha$ that intersects $\mathbb{Z}$ trivially, and such that $H \times \mathbb{Z}$ has finite index in $E_\alpha$. Since $H$ intersects $\mathbb{Z}$ trivially, the quotient $E_\alpha \to G$ realizes $H$ as a finite-index subgroup of $G$. Now the hypothesis on $H$ means that $E_{\res(\alpha)}$ splits, and thus $\res(\alpha) \in \HH^2(H; \mathbb{Z})$ is the zero class. Consider the following commutative diagram:
\[\begin{tikzcd}
	{\HH^2(G; \mathbb{Z})} & {\HH^2(G; \mathbb{Q})} \\
	{\HH^2(H; \mathbb{Z})} & {\HH^2(H; \mathbb{Q})}
	\arrow[from=1-1, to=1-2]
	\arrow[from=1-1, to=2-1]
	\arrow[from=1-2, to=2-2]
	\arrow[from=2-1, to=2-2]
\end{tikzcd}\]
where the horizontal arrows are change of coefficients maps and the vertical arrows are restriction maps. Since $\res(\alpha) = 0$, we obtain $\res(\alpha)_\mathbb{Q} = \res(\alpha_\mathbb{Q}) = 0$. But every positive integer is divisible in $\mathbb{Q}$, so the restriction to a finite-index subgroup in rational cohomology is injective \cite[Proposition III.10.4]{brownbook}. In particular $\alpha_\mathbb{Q} = 0$.

The last statement follows from Proposition \ref{prop:virtualsplitting}.
\end{proof}

Here is an easy way to tell when there exists such a class:

\begin{corollary}
\label{cor:criterion}

Let $G$ be a group, and suppose that $\HH_2(G; \mathbb{Z})$ is an infinite finitely generated group. Then there exists $\alpha \in \HH^2(G; \mathbb{Z})$ such that $\alpha_{\mathbb{Q}} \in \HH^2(G; \mathbb{Q})$ is non-zero.
\end{corollary}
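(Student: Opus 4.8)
The plan is to deduce everything from the Universal Coefficient Theorem and its naturality in the coefficient module. Recall that for any abelian group $M$, regarded as a trivial $G$-module, the UCT provides a short exact sequence
$$0 \to \Ext^1(\HH_1(G;\mathbb{Z}), M) \to \HH^2(G; M) \to \Hom(\HH_2(G;\mathbb{Z}), M) \to 0$$
which is natural in $M$. Applying this to the inclusion $\mathbb{Z} \hookrightarrow \mathbb{Q}$ produces a commutative ladder
\[\begin{tikzcd}
{\HH^2(G;\mathbb{Z})} \arrow[r, two heads] \arrow[d] & {\Hom(\HH_2(G;\mathbb{Z}), \mathbb{Z})} \arrow[d] \\
{\HH^2(G;\mathbb{Q})} \arrow[r, two heads] & {\Hom(\HH_2(G;\mathbb{Z}), \mathbb{Q})}
\end{tikzcd}\]
in which the left vertical map is the change-of-coefficients map $\alpha \mapsto \alpha_\mathbb{Q}$, the right vertical map is post-composition with $\mathbb{Z}\hookrightarrow\mathbb{Q}$, and the horizontal maps are surjective.

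Next I would exploit the hypothesis. Since $\HH_2(G;\mathbb{Z})$ is finitely generated and infinite, it has strictly positive free rank, hence admits $\mathbb{Z}$ as a direct summand; let $\phi\colon \HH_2(G;\mathbb{Z}) \to \mathbb{Z}$ be the associated projection, which is a nonzero homomorphism. Using surjectivity of the top horizontal arrow, choose $\alpha \in \HH^2(G;\mathbb{Z})$ mapping to $\phi$. Chasing the diagram, the image of $\alpha_\mathbb{Q}$ under the bottom horizontal arrow equals the composite of $\phi$ with $\mathbb{Z}\hookrightarrow\mathbb{Q}$, which is still nonzero (its value on a generator of the $\mathbb{Z}$-summand is $1 \neq 0$). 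Therefore $\alpha_\mathbb{Q} \neq 0$ in $\HH^2(G;\mathbb{Q})$, which is what we wanted.

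There is no real obstacle here: the only points requiring attention are the naturality of the UCT sequence in the coefficients (so that the ladder commutes) and the elementary fact that an infinite finitely generated abelian group surjects onto $\mathbb{Z}$. One could alternatively note that $\HH^2(G;\mathbb{Q}) \cong \Hom(\HH_2(G;\mathbb{Z}), \mathbb{Q})$, since $\mathbb{Q}$ is an injective $\mathbb{Z}$-module and hence $\Ext^1(-,\mathbb{Q}) = 0$, but this refinement is not needed for the argument.
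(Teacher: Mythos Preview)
Your proof is correct and follows essentially the same argument as the paper: both use the Universal Coefficient Theorem and its naturality in coefficients to reduce to finding a nonzero homomorphism $\HH_2(G;\mathbb{Z})\to\mathbb{Z}$, which exists because a finitely generated infinite abelian group has a $\mathbb{Z}$-summand. Your write-up is slightly more explicit about the diagram chase and the naturality, but the content is the same.
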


Note that the hypothesis of finite generation on $\HH_2(G; \mathbb{Z})$ is very mild: For instance it is satisfied by all finitely presented groups.

\begin{proof}
The Universal Coefficient Theorem \cite[Chapter I.0]{brownbook} gives the following diagram of short exact sequences:
\[\begin{tikzcd}
	0 & {\Ext^1_{\mathbb{Z}}(\HH_1(G; \mathbb{Z}); \mathbb{Z})} & {\HH^2(G;\mathbb{Z})} & {\Hom_{\mathbb{Z}}(\HH_2(G; \mathbb{Z}), \mathbb{Z})} & 0 \\
	0 & {\Ext^1_{\mathbb{Z}}(\HH_1(G; \mathbb{Z}); \mathbb{Q})} & {\HH^2(G;\mathbb{Q})} & {\Hom_{\mathbb{Z}}(\HH_2(G; \mathbb{Z}), \mathbb{Q})} & 0
	\arrow[from=1-1, to=1-2]
	\arrow[from=1-2, to=1-3]
	\arrow[from=1-3, to=1-4]
	\arrow[from=1-4, to=1-5]
	\arrow[from=2-1, to=2-2]
	\arrow[from=2-2, to=2-3]
	\arrow[from=2-3, to=2-4]
	\arrow[from=2-4, to=2-5]
	\arrow[from=1-2, to=2-2]
	\arrow[from=1-3, to=2-3]
	\arrow[from=1-4, to=2-4]
\end{tikzcd}\]
Since $\HH_2(G; \mathbb{Z})$ is finitely generated and infinite, there exists a non-zero homomorphism from $\HH_2(G; \mathbb{Z})$ to $\mathbb{Z}$, which defines a non-zero homomorphism from $\HH_2(G; \mathbb{Z})$ to $\mathbb{Q}$. Choosing a class $\alpha \in \HH^2(G; \mathbb{Z})$ that maps to this homomorphism, we obtain $\alpha_{\mathbb{Q}} \neq 0$.
\end{proof}

We can now deduce Theorem \ref{thm:main}:

\begin{proof}[Proof of Theorem \ref{thm:main}]
Let $G$ be a hyperbolic group that acts properly (and cocompactly) on a median graph. Then the same is true of $G_2 := G \times \mathbb{Z}$. Now $G$ is finitely presented, so $\HH_2(G; \mathbb{Z})$ is a finitely generated group. Assuming moreover that it is infinite, Corollary \ref{cor:criterion} produces a class $\alpha \in \HH^2(G; \mathbb{Z})$ such that $\alpha_\mathbb{Q} \neq 0$. Proposition \ref{prop:criterion} then shows that the corresponding central extension $G_1$ does not act properly on a median graph. But $G$ is hyperbolic, so $\alpha$ is bounded by Theorem \ref{thm:surjectivity}, and so $G_1$ is quasi-isometric to $G_2$.
\end{proof}

\section{Examples}
\label{s:examples}

We will now go through a list of examples of negative answers to Question \ref{q}. These will come in several classes: Hyperbolic one-relator groups and hyperbolic von Dyck groups, to which Theorem \ref{thm:main} will directly apply, and groups acting on the circle, where the boundedness of the relevant class is independent of hyperbolicity.

\subsection{One-relator groups}

Let $G_r = \langle S \mid r \rangle$ be a one-relator group, where $r \in [F(S), F(S)]$. The latter hypothesis is equivalent to the fact that the quotient $F(S) \to G_r$ induces an isomorphism in the abelianization. Under this assumption we have $\HH_2(G_r; \mathbb{Z}) \cong \mathbb{Z}$ (see e.g. \cite[Section 3.1]{sv:onerelator}). Thus Theorem \ref{thm:main} implies:

\begin{corollary}
Let $r \in [F(S), F(S)]$, and let $G_r$ be the corresponding one-relator group. Suppose that $G_r$ is hyperbolic and acts properly (and cocompactly) on a median graph. Then $G_r$ admits a central extension that does not act properly on a median graph, but is quasi-isometric to $G_r \times \mathbb{Z}$, which acts properly (and cocompactly) on a median graph.
\end{corollary}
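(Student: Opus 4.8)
The plan is to specialize Theorem~\ref{thm:main} to the group $G = G_r$; the only thing that needs to be verified is that the Schur multiplier $\HH_2(G_r;\mathbb{Z})$ is infinite.

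First I would invoke the computation recalled just before the statement. Writing $G_r = F/R$ with $F = F(S)$ and $R = \langle\langle r\rangle\rangle$ the normal closure of $r$, the hypothesis $r \in [F,F]$ forces $R \subseteq [F,F]$, since $[F,F]$ is normal in $F$; hence Hopf's formula $\HH_2(G_r;\mathbb{Z}) \cong (R\cap[F,F])/[F,R]$ reduces to $R/[F,R]$, which is infinite cyclic generated by the image of $r$ (see \cite[Section~3.1]{sv:onerelator}). Thus $\HH_2(G_r;\mathbb{Z}) \cong \mathbb{Z}$, which is in particular an infinite, finitely generated abelian group.

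Now all the hypotheses of Theorem~\ref{thm:main} hold for $G = G_r$: it is hyperbolic by assumption, it acts properly (and cocompactly) on a median graph by assumption, and its Schur multiplier is infinite by the previous paragraph. Applying the theorem produces a central extension $G_1$ of $G_r$ that does not act properly on a median graph but is quasi-isometric to $G_2 := G_r \times \mathbb{Z}$; and $G_2$ acts properly (and cocompactly) on a median graph via the product of the given action of $G_r$ with the translation action of $\mathbb{Z}$ on a line. This is precisely the statement of the corollary.

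I do not foresee any real obstacle: the argument is a one-line deduction from Theorem~\ref{thm:main} once the Schur multiplier is identified, and that identification is classical and has already been recorded in the text. (Even a weaker input would suffice: knowing only that $\HH_2(G_r;\mathbb{Z})$ is infinite and finitely generated, one could feed it into Corollary~\ref{cor:criterion} to produce a class $\alpha \in \HH^2(G_r;\mathbb{Z})$ with $\alpha_\mathbb{Q}\neq 0$, and then conclude through Proposition~\ref{prop:criterion} and Theorem~\ref{thm:surjectivity} exactly as in the proof of Theorem~\ref{thm:main}.)
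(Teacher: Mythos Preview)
Your proposal is correct and follows exactly the paper's approach: the paper simply records that $\HH_2(G_r;\mathbb{Z})\cong\mathbb{Z}$ when $r\in[F(S),F(S)]$ and then states the corollary as an immediate consequence of Theorem~\ref{thm:main}, without giving a separate proof. Your additional justification via Hopf's formula and the alternative route through Corollary~\ref{cor:criterion} are fine elaborations but not needed beyond what the paper already says.
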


\begin{example}
\label{ex:surface}

Let $\Sigma$ be a closed surface of genus $g \geq 2$. Then $\pi_1(\Sigma) = \langle a_i, b_i : i = 1, \ldots, g \mid [a_1, b_1] \cdots [a_g, b_g] = 1 \rangle$ satisfies the above hypotheses. Therefore there exists a central extension of $\pi_1(\Sigma)$ that does not act properly on a median graph, but which is quasi-isometric to $\pi_1(\Sigma) \times \mathbb{Z}$. Moreover, $\pi_1(\Sigma)$ is cocompactly cubulated: $\Sigma$ classically admits a non-positively curved square tessellation which lifts in the hyperbolic plane as a square complex whose one-skeleton is a median graph. (More precisely, think of $\Sigma$ as obtained by identifying the opposite sides of a $2g$-gon $P$, add a vertex at the center of $P$, and connect it with edges to half of the vertices of $P$ alternatively.)

This central extension has already been studied from this point of view in \cite{dastessera}. Therein, the reader can also find an elementary argument as to why this central extension is quasi-isometrically trivial, which makes no reference to (bounded) cohomology or hyperbolicity.
\end{example}

\begin{example}
Let $r \in [F(S), F(S)]$ be a proper power. The latter hypothesis is equivalent to the fact that $G_r$ has torsion. Then $G_r$ is hyperbolic and cocompactly cubulable \cite{lauerwise}. Therefore $G_r$ admits a central extension that does not act properly on a median graph, but which is quasi-isometric to $G_r \times \mathbb{Z}$, which is cocompactly cubulated.
\end{example}

\begin{example}
Let $r \in [F(S), F(S)]$ be such that the corresponding group $G_r$ has \emph{negative immersions}, as defined in \cite{negativeimmersions}. This is equivalent to the fact that $r$ has primitivity rank greater than $2$ (see \cite{negativeimmersions} or \cite{primitivityrank} for the relevant definition) or that every $2$-generated subgroup of $G_r$ is free. Then $G_r$ is hyperbolic and virtually special \cite{linton} so it follows from \cite[Theorem~3.1]{sageev} that it is cocompactly cubulated. Therefore $G_r$ admits a central extension that does not act properly on a median graph, but is quasi-isometric to $G_r \times \mathbb{Z}$, which is cocompactly cubulated.
\end{example}

\subsection{Hyperbolic von Dyck groups}

Let $a, b, c$ be positive integers such that $1/a + 1/b + 1/c < 1$. The group
$$D := D(a, b, c) := \langle x, y, z \mid x^a = y^b = z^c = xyz = 1 \rangle$$
is called a \emph{hyperbolic von Dyck group}. It is an index-$2$ subgroup in the Coxeter group
$$\Delta := \Delta(a, b, c) := \langle s, t, u \mid s^2 = t^2 = u^2 = (st)^a = (tu)^b = (us)^c = 1 \rangle.$$
which is called a \emph{hyperbolic triangle group}. Hyperbolic Coxeter groups such as $\Delta$ are cocompactly cubulated \cite{nibloreeves}, therefore this is also true of $D$. Moreover, the natural action of $D$ on the hyperbolic plane by rotations can be used to show that $\HH_2(D; \mathbb{Z}) \cong \mathbb{Z}$ (see e.g. \cite[Section 3]{triangle}). Thus Theorem \ref{thm:main} implies:

\begin{corollary}
Let $a, b, c$ be positive integers such that $1/a + 1/b + 1/c < 1$. Then $D := D(a, b, c)$ admits a central extension that does not act properly on a median graph. Moreover, this central extension is quasi-isometric to $D \times \mathbb{Z}$, which is cocompactly cubulated.
\end{corollary}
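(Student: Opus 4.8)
The plan is to deduce the corollary almost immediately from Theorem~\ref{thm:main}, after checking that hyperbolic von Dyck groups fit its hypotheses. The two things to verify are: that $D = D(a,b,c)$ is a hyperbolic group acting properly and cocompactly on a median graph, and that its Schur multiplier $\HH_2(D;\mathbb{Z})$ is infinite. Once both are in place, Theorem~\ref{thm:main} produces a central extension $G_1$ that does not act properly on a median graph but is quasi-isometric to $G_2 = D \times \mathbb{Z}$, which acts properly and cocompactly on a median graph; in particular $D \times \mathbb{Z}$ is cocompactly cubulated, giving the final sentence.

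First I would record hyperbolicity: the condition $1/a + 1/b + 1/c < 1$ is exactly the condition ensuring that the triangle group $\Delta(a,b,c)$ is the reflection group of a tessellation of the hyperbolic plane $\mathbb{H}^2$, so $\Delta$ (and hence its index-$2$ subgroup $D$) acts properly cocompactly on $\mathbb{H}^2$ and is therefore a hyperbolic group. Next, for the cubulation: $\Delta$ is a hyperbolic Coxeter group, so by Niblo--Reeves \cite{nibloreeves} it acts properly and cocompactly on the CAT(0) cube complex dual to its wall space; this is a median graph on the one-skeleton. Since $D$ has index $2$ in $\Delta$, the restricted action of $D$ on the same complex is still proper and cocompact. (Alternatively one can invoke the general principle, mentioned in the introduction, that a finite-index subgroup of a group acting properly on a graph again acts properly on a related graph, but here no such trick is needed since $D$ already sits inside $\Delta$.)

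Then I would address the Schur multiplier, which is the only genuinely arithmetic point. The cleanest route is the one indicated in the excerpt: use the action of $D$ on $\mathbb{H}^2$ by orientation-preserving isometries. The quotient orbifold is a sphere with three cone points of orders $a, b, c$, and a standard computation of the (orbifold) second homology — or equivalently of the Schur multiplier of a triangle-type presentation, as carried out in \cite[Section 3]{triangle} — yields $\HH_2(D;\mathbb{Z}) \cong \mathbb{Z}$. One can sanity-check this against the presentation $\langle x,y,z \mid x^a, y^b, z^c, xyz\rangle$: Hopf's formula expresses $\HH_2$ as $(R \cap [F,F])/[F,R]$ for $F$ free on $x,y,z$ and $R$ the normal closure of the relators, and the single "global" relation $xyz$ (as opposed to the torsion relations $x^a, y^b, z^c$) contributes an infinite cyclic summand exactly as in the closed-surface case, while the torsion relators do not kill it. Having established $\HH_2(D;\mathbb{Z}) \cong \mathbb{Z}$, which is infinite, Theorem~\ref{thm:main} applies verbatim and the corollary follows.

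The only step that requires any real care is the computation $\HH_2(D;\mathbb{Z}) \cong \mathbb{Z}$; everything else is a citation or an immediate consequence of the index-$2$ inclusion $D \leq \Delta$. Since the excerpt explicitly supplies the reference \cite[Section 3]{triangle} for this fact, in the write-up I would simply cite it rather than reproduce the Hopf-formula calculation, so the proof reduces to: invoke hyperbolicity via the $\mathbb{H}^2$-action, invoke \cite{nibloreeves} for the cubulation of $\Delta$ and pass to the finite-index subgroup $D$, quote $\HH_2(D;\mathbb{Z})\cong\mathbb{Z}$ from \cite{triangle}, and apply Theorem~\ref{thm:main}.
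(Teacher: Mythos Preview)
Your proposal is correct and mirrors the paper's own argument exactly: the paper establishes the corollary in the paragraph preceding it by noting that $D$ is hyperbolic, that $\Delta$ (hence its index-$2$ subgroup $D$) is cocompactly cubulated via \cite{nibloreeves}, and that $\HH_2(D;\mathbb{Z})\cong\mathbb{Z}$ via \cite{triangle}, then invokes Theorem~\ref{thm:main}. Your extra Hopf-formula heuristic is not needed (and is a bit loose as stated), but since you plan to cite \cite{triangle} anyway this does no harm.
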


\begin{example}
As an explicit example, the $(2, 3, 7)$-von-Dyck group satisfies the previous corollary. This group and its central extension $\langle x, y, z \mid x^2 = y^3 = z^7 = xyz \rangle$, sometimes called the \emph{$(2, 3, 7)$-homology sphere group}, have often served as examples of peculiar behaviour, particularly in the theory of left-orderable groups (see e.g. \cite{thurston, manntriestino}), which leads naturally to the following subsection.
\end{example}

\subsection{Groups acting on the circle}
\label{circle}

In this subsection we go through a different class of examples, where the boundedness of the classes comes from their dynamical nature and not from hyperbolicity. Let $G$ be a group acting faithfully by orientation-preserving homeomorphisms on the circle. To this action, one associates a cohomology class $\alpha \in \HH^2(G; \mathbb{Z})$, called the \emph{Euler class}, which is moreover bounded. The corresponding central extension $\overline{G}$ is the group of homeomorphisms of the line that commute with integer translations and induce $G$ on the quotient; in particular $\overline{G}$ is left-orderable and thus torsion-free. See \cite{ghys} or \cite[Chapter 10]{frigerio} for more details. \\

While the vanishing or non-vanishing of the Euler class in bounded cohomology is well-understood from a dynamical point of view, the same cannot be said of the Euler class in cohomology, which vanishes much more often (see e.g. \cite[Section 6.2]{ghys}). On the other hand, we can use a different criterion than Proposition \ref{prop:criterion} to ensure that the corresponding central extension does not virtually split:

\begin{lemma}
Let $G$ be a group that is not virtually torsion-free, and let $E$ be a torsion-free central extension. Then the extension does not virtually split.
\end{lemma}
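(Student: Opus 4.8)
The plan is to argue by contradiction, reusing the unravelling of ``virtually splits'' from the proof of Proposition~\ref{prop:criterion}. Write the central extension as $1 \to Z \to E \xrightarrow{\pi} G \to 1$, where $Z$ is the central kernel (in the applications $Z \cong \mathbb{Z}$, but the argument uses nothing about $Z$ beyond its being the kernel). Suppose the extension virtually splits: there is a subgroup $H \leq E$ with $H \cap Z = \{1\}$ and $HZ = H \times Z$ of finite index in $E$.

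The next step is to push this finite-index subgroup down to $G$. Since $\pi$ is surjective with kernel $Z$, the image $\pi(HZ)$ is a finite-index subgroup of $G$; but $Z \subseteq \ker \pi$, so $\pi(HZ) = \pi(H)$, and since $H \cap Z = \{1\}$ the restriction $\pi|_H$ is injective. Thus $\pi$ identifies $H$ with a finite-index subgroup of $G$.

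Finally, $H$ is a subgroup of $E$, which is torsion-free by hypothesis, so $H$ is torsion-free. Hence $G$ contains a torsion-free finite-index subgroup, i.e.\ $G$ is virtually torsion-free, contradicting the hypothesis. I do not expect any genuine obstacle: the only points requiring a word of care are that $HZ$ really has finite index in $E$ (part of the definition of virtual splitting) and that its image $\pi(H)$ inherits finite index in $G$, which is immediate from surjectivity of $\pi$. This lemma will then be combined with the fact that the lifts $\overline{G}$ of circle actions are left-orderable, hence torsion-free, to produce examples where the relevant $G$ (e.g.\ a group with torsion acting on $\mathbb{S}^1$) is not virtually torsion-free.
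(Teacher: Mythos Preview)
Your proof is correct and follows essentially the same route as the paper's: assume a virtual splitting, identify $H$ with a finite-index subgroup of $G$ via $\pi$, and observe that $H$ inherits torsion-freeness from $E$, contradicting the hypothesis on $G$. The paper's version is slightly terser (it simply asserts that a virtual splitting gives a finite-index subgroup $\mathbb{Z} \times H$ with $H$ isomorphic to a finite-index subgroup of $G$), but the content is identical.
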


\begin{proof}
If the extension does virtually split, then $E$ contains a finite-index subgroup of the form $\mathbb{Z} \times H$, where $H$ is isomorphic to a finite-index subgroup of $G$. Since $E$ is torsion-free, so is $H$, and thus $G$ is virtually torsion-free.
\end{proof}

\begin{corollary}
Let $G$ be a finitely generated group of orientation-preserving homeomorphism of the circle, and let $\overline{G}$ be the corresponding central extension. Then $\overline{G}$ is quasi-isometric to $G \times \mathbb{Z}$. If moreover $G$ is not virtually torsion-free, then $\overline{G}$ does not act properly on a median graph.
\end{corollary}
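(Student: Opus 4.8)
The plan is to deduce both assertions from tools already in place. For the quasi-isometry statement, note that by construction $\overline{G}$ is precisely the central extension $E_\alpha$ attached to the Euler class $\alpha \in \HH^2(G;\mathbb{Z})$ of the circle action, and that $\alpha$ is bounded. Since $G$ is finitely generated, Gersten's theorem \cite{gersten} applies verbatim and yields that $\overline{G} = E_\alpha$ is quasi-isometric to $G \times \mathbb{Z}$. This step is a pure citation.

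For the second assertion, recall from the discussion above that $\overline{G}$ embeds in $\mathrm{Homeo}(\mathbb{R})$ as the homeomorphisms commuting with integer translations that induce $G$ on the quotient, so it is left-orderable and in particular torsion-free. Thus $1 \to \mathbb{Z} \to \overline{G} \to G \to 1$ is a torsion-free central extension, and under the extra hypothesis that $G$ is not virtually torsion-free the Lemma above immediately gives that this extension does not virtually split.

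It then remains to run the same final step as in the proof of Proposition~\ref{prop:criterion}, passing from ``does not virtually split'' to ``does not act properly on a median graph''. Suppose for contradiction that $\overline{G}$ acts properly on a median graph $X$, and consider the central subgroup $\mathbb{Z} \lhd \overline{G}$. Any non-trivial $z \in \mathbb{Z}$ has infinite order, and I claim it has unbounded orbits in $X$: otherwise $z$ would be an elliptic isometry of the associated CAT(0) cube complex, hence would stabilise a non-empty cube, hence some power $z^{n}$ with $n \geq 1$ would fix a vertex; but then the stabiliser of that vertex would contain the infinite group $\langle z^{n} \rangle$, contradicting properness. So every non-trivial element of $\mathbb{Z}$ has unbounded orbits, and Proposition~\ref{prop:virtualsplitting} (applied with $A = \mathbb{Z}$) exhibits $\mathbb{Z}$ as a direct factor in a finite-index subgroup of $\overline{G}$; that is, the extension virtually splits, a contradiction.

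The whole argument is essentially bookkeeping on top of Section~\ref{s:obstruction}, the one genuinely new ingredient being the Lemma on torsion-free central extensions, which is itself elementary. The only slightly delicate point is the claim that a central infinite cyclic subgroup acting properly on a median graph must have unbounded orbits; this is already implicit in Proposition~\ref{prop:criterion} and rests on the standard fact that a cubical isometry with bounded orbits stabilises a finite cube, so I do not expect it to be a real obstacle.
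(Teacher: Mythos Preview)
Your argument is correct and follows the paper's intended proof: the quasi-isometry comes from boundedness of the Euler class together with Gersten's theorem, and the obstruction to a proper action comes from combining the preceding Lemma (torsion-free extension of a non-virtually-torsion-free group cannot virtually split) with Proposition~\ref{prop:virtualsplitting}. The only difference is that you spell out explicitly why a proper action forces the central $\mathbb{Z}$ to have unbounded orbits, a step the paper leaves implicit in the last line of the proof of Proposition~\ref{prop:criterion}.
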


\begin{example}[Theorem \ref{thm:mainTwo}]
\label{ex:ThompsonLift}
Let $T$ be Thompson's group acting on the circle \cite{thompson}. Then $T$ is simple and has torsion, so its lift to the real line $\overline{T}$ does not act properly on a median graph. (As an alternative argument, one can observe that $\overline{T}$ contains $\mathbb{Q}$ as a subgroup \cite{QinTbar}, which prevents it from acting properly on a median graph \cite{Haglund}.) However $T$ acts properly on a median graph \cite{farley} and therefore so does $T \times \mathbb{Z}$.
\end{example}

\begin{example}
More generally, let $T_{n, r}$ be a Stein--Thompson group acting on the circle \cite{bieristrebel, brown, stein}. The subgroup $T_n \leq T_{n, r}$ is simple and has torsion, therefore $T_{n, r}$ is not virtually torsion-free, so again $\overline{T_{n,r}}$ does not act properly on a median graph. However $T_{n, r}$ acts properly on a median graph \cite{farley} and therefore so does $T_{n, r} \times \mathbb{Z}$.
\end{example}

\subsection{Uncountably many examples}

We end by showing Corollary \ref{cor:many}. This will be an easy application of our previous examples and the following result:

\begin{lemma}
Let $G$ be a cocompactly cubulated group with a central extension $G_2$ quasi-isometric to $G_1 \cong G \times \mathbb{Z}$ that does not virtually split. Then for every (cocompactly) cubulated group $H$, the group $G_1 \times H$ is (cocompactly) cubulated, while $G_2 \times H$ does not act properly on a median graph.
\end{lemma}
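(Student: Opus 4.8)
The plan is to verify the two assertions separately, both by reducing to facts already established in the excerpt. For the positive direction, I would argue that $G_1 \times H = (G \times \mathbb{Z}) \times H \cong (G \times H) \times \mathbb{Z}$, and since $G$ and $H$ are both (cocompactly) cubulated, so is $G \times H$ (a product of proper, cocompact actions on median graphs is a proper, cocompact action on the product median graph), hence so is $(G \times H) \times \mathbb{Z}$, and therefore $G_1 \times H$. This step is essentially bookkeeping.

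The main point is the negative direction: showing that $G_2 \times H$ does not act properly on a median graph. Here I would exploit the central subgroup. Let $\mathbb{Z} \lhd G_2$ be the kernel of the central extension $1 \to \mathbb{Z} \to G_2 \to G \to 1$; then $\mathbb{Z} \times \{1\}$ is a central, hence normal, finitely generated abelian subgroup $A$ of $G_2 \times H$. Suppose for contradiction that $G_2 \times H$ acts properly on a median graph $X$. I would then want to invoke Proposition \ref{prop:virtualsplitting}, which requires that every non-trivial element of $A$ has unbounded orbits in $X$. Granting this for a moment, Proposition \ref{prop:virtualsplitting} would give that $A$ is a direct factor in a finite-index subgroup $K$ of $G_2 \times H$, i.e. $K \cong A \times L$ for some subgroup $L$. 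Intersecting with the projection, one sees that this forces the central extension $G_2 \to G$ to virtually split: indeed, $K \cap G_2$ has finite index in $G_2$ and decomposes compatibly with $A = \mathbb{Z}$, contradicting the hypothesis that the extension does not virtually split. (I would need to be a little careful here: $L$ need not lie inside $G_2$, but its image under the projection $G_2 \times H \to G_2$, together with the finite-index condition, is what yields a virtual splitting of $G_2 \to G$; this is a short diagram chase of the same flavour as the proof of Proposition \ref{prop:criterion}.)

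The step I expect to be the main obstacle is verifying the hypothesis of Proposition \ref{prop:virtualsplitting}, namely that every non-trivial element of the central $\mathbb{Z} = A$ acts with unbounded orbits on $X$. This is where properness of the action on $X$ must be used together with the structure of $G_2$. The key observation is that the central $\mathbb{Z}$ in $G_2$ is an infinite, hence torsion-free, subgroup; if some non-trivial power $z^n$ had a bounded orbit in $X$, then (since $z$ is central, bounded orbits of $z^n$ would be preserved by all of $G_2 \times H$ up to the action, and) one could find a bounded orbit — more to the point, in a median graph, an element with a bounded orbit stabilises a vertex after passing to a power, contradicting properness unless that power is trivial. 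So for any proper action on a median graph, an infinite-order element automatically has unbounded orbits; applying this to each non-trivial $z^n \in A$ gives the hypothesis. Once this is in place, the contradiction closes and the lemma follows; the remaining details (associativity and commutativity of direct products, the diagram chase producing the virtual splitting) are routine.
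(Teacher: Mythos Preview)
Your proposal is correct and follows exactly the paper's approach, which is even terser: the paper simply asserts that $G_2 \times H$ has no finite-index subgroup containing the central $\mathbb{Z}$ as a direct factor and invokes Proposition~\ref{prop:virtualsplitting}, leaving both the unbounded-orbits check and the virtual-splitting deduction implicit. One small correction to your parenthetical: the clean way to extract a virtual splitting of $G_2$ from a decomposition $K = A \times L$ of a finite-index subgroup $K \leq G_2 \times H$ is to intersect with $G_2 \times \{1\}$ (giving $K \cap (G_2 \times \{1\}) = A \times (L \cap (G_2 \times \{1\}))$, which has finite index in $G_2$), rather than to project $L$ to $G_2$, since the projection of $L$ may meet $A$ nontrivially.
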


\begin{proof}
Since $G_1$ and $H$ both act properly (and cocompactly) on a median graph, the same is true of their product. On the other hand, $G_2 \times H$ still does not have a finite-index subgroup which contains the center as a direct factor, and so it does not act properly on a median graph by Proposition \ref{prop:virtualsplitting}.
\end{proof}

\begin{proof}[Proof of Corollary \ref{cor:many}]
Let $\{ H_i \}_{i \geq 1}$ be an infinite family of pairwise non-isomorphic quasi-isometric groups acting properly and cocompactly on a median graph, for instance all the free groups of finite rank or all the fundamental groups of closed hyperbolic surfaces. Let $G_1, G_2$ be groups as in Theorem \ref{thm:main}, for instance the two central extensions of a surface group as in Example \ref{ex:surface}. Then $G_1 \times H_i$ are all cocompactly cubulated, while none of $G_2 \times H_i$ act properly on a median graph. Moreover, the $G_2 \times H_i$ are pairwise non-isomorphic as they have different abelianizations.

The second item follows from the same argument, by letting $\{ H_i \}_{i \in I}$ be an uncountable family of pairwise non-isomorphic quasi-isometric groups acting properly on a median graph. Since a finitely generated group has only countably many finitely generated subgroups, it follows that among the groups $G_2 \times H_i$ there are still uncountably many isomorphism classes. For instance, one can consider pairwise distinct finite central extensions of the Grigorchuk group or of the lamplighter group $\mathbb{Z} \wr \mathbb{Z}$ as shown in \cite{erschler}, these two groups acting properly on median graphs according to \cite{actionswreath} (see also \cite{hilbertian}) and \cite{GrigorCube}.

Let us sketch a direct elementary construction (in the same spirit) of distinct finite central extensions of the lamplighter group $L_2:=\mathbb{Z}/2\mathbb{Z} \wr \mathbb{Z}$. This group admits 
$$\langle a,t \mid a^2=1, [t^nat^{-n},a]=1 \ (n \in \mathbb{N}) \rangle$$
as a presentation. Now, fix an arbitrary subset $I \subset \mathbb{N}$ and define the new group
$$G_I:= \left\langle a,t,z \mid a^2=z^2=[z,a]=[z,t]=1, [t^nat^{-n},a] = \left\{ \begin{array}{cl} 1 & \text{if } n \in I \\ z & \text{if } n \notin I \end{array} \right. \right\rangle.$$
In other words, we add a central element of order two and we use it to ``twist'' the commutator relations of $L_2$. We get a central extension
$$1 \to \mathbb{Z}/2\mathbb{Z} \to G_I \to L_2 \to 1.$$
The claim is that, for all $I,J \subset \mathbb{N}$, $G_I$ and $G_J$ are isomorphic if and only if $I=J$. The key observation is that every automorphism of $L_2$ (as described, for instance, in \cite{matuccisilva} or \cite{WreathMorphisms}) extends to $G_I$. Thus, if there exists an isomorphism $G_I \to G_J$, then we can assume without loss of generality that it induces the identity when we mod out by the centers. Such an isomorphism must send the element $[t^nat^{-n},a]$ of $G_I$ to the same element $[t^nat^{-n},a]$ of $G_J$ for every $n \geq 1$, which implies that $I=J$. 
\end{proof}

\section{Concluding remarks}
\label{s:conclusion}

In this note, we have proved that many central extensions do not act properly on median graphs, the geometric obstruction being recorded by Proposition~\ref{prop:virtualsplitting}. Such a virtual splitting is well-known for loxodromic isometries in CAT(0) spaces, and it is reasonable to expect the same property to hold in various other nonpositively curved spaces. 

In that spirit, define a metric space $X$ to be \emph{loxodromically indicable} if, for every \emph{loxodromic isometry} $g \in \mathrm{Isom}(X)$ (i.e. $n \mapsto g^n \cdot x$ induces a quasi-isometric embedding $\mathbb{Z} \to X$ for every $x \in X$), there exists a morphism $\varphi : C(g) \to \mathbb{Z}$ satisfying $\varphi(g) \neq 0$, where $C(g)$ denotes the centralizer of $g$ in $\mathrm{Isom}(X)$. Median graphs and CAT(0) spaces are loxodromically indicable (see the proof of \cite[Theorem~II.6.12]{bridsonhaefliger} for CAT(0) spaces; the proof of \cite[Theorem~5.4]{CFTT} and the improvement \cite[Proposition~11.2.1]{MedianBook} for median graphs), and a virtual splitting similar to Proposition~\ref{prop:virtualsplitting} holds for every loxodromically indicable space. More precisely, it follows from the proof of \cite[Theorem~II.6.12]{bridsonhaefliger}:

\begin{proposition}
Let $G$ be a group acting  on a loxodromically indicable space $X$. If $A \leq G$ is a central finitely generated subgroup all of whose non-trivial elements are loxodromic, then $A$ is a direct factor in a finite-index subgroup of $G$.
\end{proposition}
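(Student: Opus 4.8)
The plan is to follow the classical CAT(0) argument of Bridson--Haefliger almost verbatim, using only the abstract property of loxodromic indicability. Fix a central finitely generated subgroup $A \leq G$ all of whose non-trivial elements are loxodromic, and consider any non-trivial $a \in A$. By loxodromic indicability applied to $a$, there is a homomorphism $\varphi_a : C(a) \to \mathbb{Z}$ with $\varphi_a(a) \neq 0$. The point is that $A$ is central, hence $A \leq C(a)$, and in fact $G \leq C(a)$ if $a$ is central in $G$; thus $\varphi_a$ restricts to a homomorphism $A \to \mathbb{Z}$ (indeed $G \to \mathbb{Z}$) that is non-zero on $a$. So every non-trivial element of $A$ survives in some homomorphism $A \to \mathbb{Z}$.

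Next I would run the standard finite-generation bookkeeping. Write $A \cong \mathbb{Z}^k \oplus T$ with $T$ finite. First pass to the subgroup $A_0 := \mathbb{Z}^k$, the free part (or rather, handle torsion at the end; the loxodromic hypothesis forces every non-trivial element of $A$, including torsion elements, to be loxodromic, but a torsion element cannot induce a quasi-isometric embedding of $\mathbb{Z}$, so in fact $T = 0$ and $A$ is free abelian of rank $k$). Pick a basis $a_1, \dots, a_k$ of $A$. Using the homomorphisms $\varphi_{a_i} : G \to \mathbb{Z}$ furnished above — or, more efficiently, a single homomorphism $\psi : G \to \mathbb{Z}^k$ built so that $\psi|_A$ has finite cokernel — one gets that $\psi|_A : A \to \mathbb{Z}^k$ is injective with image of finite index. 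Then $B := \ker(\psi) \cap A$... is trivial, so actually the cleaner route is: the composite $A \hookrightarrow G \xrightarrow{\psi} \mathbb{Z}^k$ is injective with finite-index image, so after replacing $\mathbb{Z}^k$ by that image we may assume it is an isomorphism; set $H := \ker(\psi)$, a normal subgroup of $G$ with $H \cap A = 1$ and $HA = G' := \psi^{-1}(\psi(A))$ of finite index in $G$. Since $A$ is central, $G' = H \times A$ up to the obvious identification, exhibiting $A$ as a direct factor in the finite-index subgroup $G'$ of $G$.

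To assemble the single homomorphism $\psi$: for each basis element $a_i$ take $\varphi_{a_i} : G \to \mathbb{Z}$ with $\varphi_{a_i}(a_i) \neq 0$, and set $\psi = (\varphi_{a_1}, \dots, \varphi_{a_k}) : G \to \mathbb{Z}^k$. The matrix $(\varphi_{a_i}(a_j))_{i,j}$ need not be diagonal, but $\varphi_{a_i}(a_i) \neq 0$ does not alone guarantee its determinant is non-zero; so instead I would argue inductively, or observe that it suffices for $\psi|_A$ to have finite kernel, which follows since any $a \in \ker(\psi|_A)$ satisfies $\varphi_a'(a) = 0$ for the homomorphisms we chose — here one must be slightly careful and rechoose. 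The robust fix is the standard one in \cite[Theorem~II.6.12]{bridsonhaefliger}: induct on $k$. If $k = 0$ (so $A$ is torsion, hence trivial by the quasi-isometric embedding obstruction) there is nothing to do. Otherwise pick $a \in A$ with $\varphi := \varphi_a : G \to \mathbb{Z}$, $\varphi(a) \neq 0$; then $\varphi(A)$ is a non-trivial subgroup of $\mathbb{Z}$, say generated by $\varphi(a')$ for some $a' \in A$, and $A = \langle a' \rangle \oplus (\ker \varphi \cap A)$ since $\mathbb{Z}$ is free. The subgroup $A' := \ker\varphi \cap A$ is central of rank $k-1$, still all loxodromic, so by induction it is a direct factor in a finite-index subgroup $G'' \leq \ker\varphi$; then $\langle a' \rangle$ splits off using $\varphi$ itself on the finite-index subgroup $\varphi^{-1}(\varphi(A)) \cap G''$-style argument, giving $A$ as a direct factor in a finite-index subgroup of $G$.

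The main obstacle is purely organizational: making the induction on $\operatorname{rank}(A)$ interface cleanly, ensuring at each stage that the finite-index subgroup produced is compatible with the next splitting (i.e. that intersecting finitely many finite-index subgroups and kernels of homomorphisms to $\mathbb{Z}$ still leaves a finite-index subgroup containing the relevant part of $A$ as a direct factor). None of this uses anything about $X$ beyond the defining property of loxodromic indicability, so the proof is genuinely a transcription of \cite[Theorem~II.6.12]{bridsonhaefliger} with ``semisimple isometry of a CAT(0) space'' replaced by ``loxodromic isometry'' and the Flat Torus Theorem's centralizer homomorphism replaced by the abstractly postulated $\varphi : C(g) \to \mathbb{Z}$; I would simply cite that proof and indicate the one substitution.
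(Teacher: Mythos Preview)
Your proposal is correct and is precisely the approach the paper takes: the paper does not write out a proof at all but simply states that the proposition ``follows from the proof of \cite[Theorem~II.6.12]{bridsonhaefliger}'', which is exactly the substitution you describe (replacing the CAT(0) Flat Torus/centralizer homomorphism by the abstractly postulated $\varphi:C(g)\to\mathbb{Z}$ from loxodromic indicability, noting $G\leq C(a)$ since $A$ is central, and then running the rank-induction of Bridson--Haefliger). Your final sentence---cite that proof and indicate the one substitution---is literally what the paper does.
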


Investigating for more examples of loxodromically indicable spaces is an interesting problem. Metric spaces for which Axis and Flat Torus Theorems are known provide good candidates, such as systolic complexes, Helly graphs, spaces with good bicombings, Garside groups. \\

In another direction, one could claim that the central extensions constructed in this note are not so different from groups acting on median graphs and that the notion of median graphs is too rigid in order to include them. A key observation is that our central extensions do not admit morphisms to $\mathbb{Z}$ that are non-trivial on their centers but they do admit quasi-morphisms that are unbounded on their centers. This motivates the idea to relax the definition of median graphs in order to encompass quasi-lines.

\begin{definition}
Let $X$ be a graph. Given two vertices $x,y \in X$ and some constant $\delta \geq 0$, define the \emph{$\delta$-interval} 
$$I_\delta(x,y):= \{ z \in X \mid |d(x,y)-d(x,z)-d(z,y)| \leq \delta\}.$$
A graph is \emph{almost median} if there exist $\delta,\Delta \geq0$ such that, for all $x,y,z \in X$, the intersection
$$I_\delta(x,y) \cap I_\delta(y,z) \cap I_\delta(x,z)$$
is non-empty and has diameter $\leq \Delta$. 
\end{definition}

In other words, we allow some additive error in the definition of median graphs. This general process has been applied successfully in other contexts, for instance in the study of \emph{coarse Helly graphs}.

When $\delta=\Delta=0$, we recover the definition of median graphs. As desired, every quasi-line (and, in fact, every hyperbolic graph) is almost median. See \cite{elder, DrutuChatterji} for similar definitions. We emphasize that almost median graphs are distinct from \emph{coarse median spaces} introduced in \cite{MR3037559}. Almost median graphs are reasonably coarse median, but the converse is probably false.

\begin{proposition}
Let $1 \to \mathbb{Z} \to E \to Q \to 1$ be a bounded central extension. If $Q$ acts properly on an almost median graph, then so does $E$.
\end{proposition}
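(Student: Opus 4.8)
The plan is to build an action of $E$ on an almost median graph out of the given action of $Q$ on an almost median graph $Y$, by exploiting the fact that, as a bounded central extension, $E$ is quasi-isometric to $Q \times \mathbb{Z}$. First I would fix a bounded inhomogeneous cocycle $\omega \colon Q^2 \to \mathbb{Z}$ representing the class $\alpha \in \HH^2(Q;\mathbb{Z})$ defining $E$, so that $E$ can be described concretely as the set $Q \times \mathbb{Z}$ with twisted multiplication $(q,m)(q',m') = (qq', m+m'+\omega(q,q'))$. The natural candidate for $E$ to act on is then the product graph $X := Y \times \mathbb{Z}$ (the Cayley graph of $\mathbb{Z}$ in the obvious sense, or more robustly the "prism" graph), equipped with the action $(q,m) \cdot (y,n) = (q\cdot y,\ n + m + c(q,y))$, where $c \colon Q \times Y \to \mathbb{Z}$ is a suitable "primitive" bookkeeping function. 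The point of the boundedness of $\omega$ is that $c$ can be chosen to change the $\mathbb{Z}$-coordinate only by a bounded amount as one moves along edges of $Y$; this is precisely what keeps the action by graph automorphisms (or at least by uniform quasi-isometries) of a graph that is quasi-isometric to $Y \times \mathbb{Z}$.

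The key steps, in order, are: (1) recall from Proposition (Gersten) the shape of the quasi-isometry $E \to Q \times \mathbb{Z}$ and extract from its proof the bounded twisting data; (2) define the graph $X$ and the candidate $E$-action, checking it is a genuine action (associativity of the twisted product matches the cocycle identity for $\omega$) and that each element acts as an automorphism, or as a quasi-automorphism with uniformly bounded error coming from $\|\omega\|_\infty$; (3) verify that $X$ is almost median. For (3) the observation is that $Y$ is $(\delta,\Delta)$-almost median and $\mathbb{Z}$ is median (it is an honest median graph, $\delta=\Delta=0$), so $Y\times\mathbb{Z}$ should be $(\delta',\Delta')$-almost median with $\delta'$ controlled by $\delta$ and $\Delta'$ by $\Delta$: indeed a coarse median point for a triple $((y_1,n_1),(y_2,n_2),(y_3,n_3))$ can be taken as (almost-median of the $y_i$, median of the $n_i$), and the $\ell^1$ nature of the product metric lets the two coordinates' interval estimates add up. (4) Finally, if the action in step (2) is only a quasi-action, replace $X$ by a graph on which $E$ acts genuinely and properly — e.g. transport the metric/action through the quasi-isometry, or directly use a Cayley graph of $E$ with respect to a finite generating set, and argue that it is almost median because it is quasi-isometric to $Y \times \mathbb{Z}$, invoking that the class of almost median graphs is closed under quasi-isometry among graphs of bounded degree (which should follow from the coarse/additive-error nature of the definition, much as hyperbolicity is QI-invariant). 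Properness of the $E$-action is then immediate since $Q$ acts properly on $Y$, the $\mathbb{Z}$-factor acts freely, and the extension is by $\mathbb{Z}$.

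The main obstacle I expect is reconciling "genuine action on a graph" with "bounded cocycle twisting": the twisted action on the literal product $Y \times \mathbb{Z}$ need not preserve the graph structure on the nose, since moving along a $Y$-edge can shift the $\mathbb{Z}$-coordinate by $\|\omega\|_\infty$ rather than by $0$ or $1$. The cleanest fix is probably to bypass an explicit product model altogether: take any Cayley graph of $E$ (it is finitely generated, being quasi-isometric to the finitely generated group $Q\times\mathbb Z$, itself finitely generated since $Q$ is — $Q$ acts properly on a locally finite… — here one should note $Q$ is finitely generated because it acts properly on a connected graph with finite quotient data, or simply assume it), let $E$ act on it by left multiplication, and then prove the genuinely load-bearing lemma: \emph{a graph quasi-isometric to $Y\times\mathbb Z$, with $Y$ almost median, is itself almost median}. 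This reduces everything to showing that "almost median" (with its two additive constants $\delta,\Delta$) is a quasi-isometry invariant of graphs — a statement not proved in the excerpt but squarely in the spirit of the definition — together with the elementary "$Y$ almost median $\Rightarrow Y\times\mathbb Z$ almost median" computation of step (3). If one is unwilling to prove full QI-invariance, the fallback is to carry out steps (1)–(3) carefully and show the twisted map is a quasi-automorphism with uniform constants, then appeal to a Švarc–Milnor-type argument to replace the quasi-action by a proper action on a nearby graph; the diameter bound $\Delta'$ survives because the error is additive and uniform.
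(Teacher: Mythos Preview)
Your overall architecture is right—pair the given almost median graph $M$ with something $\mathbb{Z}$-like and let $E$ act on the product—and you correctly diagnose why the na\"ive twisted action on $M\times\mathbb{Z}$ cannot be by graph automorphisms (forcing $c(q,\cdot)$ to be constant along $Y$-edges collapses $\omega$ to a coboundary). But both of your proposed repairs have real gaps. The Cayley-graph route needs (i) $E$ to be finitely generated, which is not in the hypotheses—nothing in ``$Q$ acts properly on an almost median graph'' forces $Q$, hence $E$, to be finitely generated—and (ii) quasi-isometry invariance of the class of almost median graphs, which you assert by analogy with hyperbolicity but do not prove; there is no Morse lemma available in this generality, and the paper never claims or uses such invariance. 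The ``upgrade the quasi-action via \v{S}varc--Milnor'' alternative is too vague to count as a proof.

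The paper resolves the obstacle with one idea you are missing. The projection $(z,q)\mapsto z$ is a quasi-morphism $E\to\mathbb{Z}$ with defect bounded by $\|\omega\|_\infty$, and from an unbounded quasi-morphism one can build a \emph{genuine isometric} action of $E$ on a quasi-line $L$ (this is a standard construction; the paper cites \cite[Lemma~4.15]{hyperbolicstructures}). A quasi-line is hyperbolic, hence almost median by the remark immediately following the definition. Now let $E$ act on the $\ell^1$-product $L\times M$: on the first factor via the quasi-line action, on the second via the quotient $E\to Q$. This is an honest action by graph automorphisms, it is proper because the $Q$-action on $M$ is and the central $\mathbb{Z}$ acts unboundedly on $L$, and the only remaining lemma is exactly your step~(3): an $\ell^1$-product of almost median graphs is almost median. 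So the load-bearing move is not quasi-isometry invariance of the target class, but replacing the $\mathbb{Z}$ factor by a quasi-line on which $E$ already acts isometrically.
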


\begin{proof}
We can describe $E$ as the set $\mathbb{Z} \times Q$ endowed with the product
$$(z_1,q_1) \cdot (z_2,q_2) = (z_1+z_2+c(q_1,q_2), q_1q_2), \ z_1,z_2 \in \mathbb{Z}, q_1,q_2 \in Q$$
for some inhomogeneous bounded cocycle $c$. The map $(z,q) \mapsto z$ defines a quasi-morphism $E \to \mathbb{Z}$, from which we can define an action of $E$ on a quasi-line $L$ \cite[Lemma~4.15]{hyperbolicstructures}. Therefore, if $M$ is an almost median graph on which $Q$ acts properly, then $E$ naturally acts on $L \times Q$, and the action is proper. An $\ell^1$-product of almost median graphs being almost median, the desired conclusion follows.
\end{proof}

A natural problem to investigate is to which extent the geometry of almost median graphs is similar to the geometry of median graphs. As a consequence of the above proposition, almost-median graphs are no longer loxodromically indicable, but it may be expected that a coarse version of the property still holds: If $X$ is an almost median graph and $g \in \mathrm{Isom}(X)$ a loxodromic isometry, does there exist a quasi-morphism $\mathrm{Isom}(X) \to \mathbb{R}$ that is unbounded on $\langle g \rangle$?

Following \cite{bridson}, it is shown in \cite{CFTT} that mapping class groups cannot act properly on median graphs, precisely because the latters are loxodromically indicable. Interestingly, one can reasonably say that this is the only obstruction since mapping class groups act properly on products of hyperbolic spaces \cite{BBF} (even quasi-trees \cite{MR4315766}) and a fortiori on almost median graphs. However, it is not clear whether such an action on an almost median graph can be in addition cocompact.   \\

Finally, since the groups we considered all come from central extensions, it would be interesting to produce counterexamples to Question \ref{q} among center-free groups. A reasonable source of candidates is provided by \cite{commensuratingHNN}, as it follows from \cite{huangprytula} that they cannot act properly on median graphs with finite cubical dimension.

\vspace{0.5cm}

\footnotesize 
\bibliographystyle{alpha}
\bibliography{ref}

\vspace{0.5cm}

\normalsize

\noindent{\textsc{Department of Mathematics, ETH Z\"urich, Switzerland}}

\noindent{\textit{E-mail address:} \texttt{francesco.fournier@math.ethz.ch}} \\

\noindent{\textsc{Institut Montpellierain Alexander Grothendieck, Montpellier, France}}

\noindent{\textit{E-mail address:} \texttt{anthony.genevois@umontpellier.fr}}

\end{document}